\numberwithin{equation}{section}	
\theoremstyle{plain}
\newtheorem{thm}{Theorem}[section]
\newtheorem{propo}[thm]{Proposition}
\theoremstyle{definition}
\newtheorem{de}[thm]{Definition}
\newtheorem{ex}[thm]{Example}
\theoremstyle{remark}
\newtheorem{rmk}[thm]{Remark}
\renewcommand{\epsilon}{\varepsilon}
\newcommand{\largewedge}{\mbox{\large $\wedge$}} 	
\newcommand{\Rr}{\mathbb{R}}
\newcommand{\Zz}{\mathbb{Z}}
\title{Alexander invariants of ribbon tangles and planar algebras}
\author{Celeste Damiani}
\address{%
Laboratoire de Math\'{e}matiques Nicolas Oresme (LMNO) CNRS UMR 6139 \\
Universit\'{e} de Caen BP 5186\\
14032 CAEN Cedex France
}
\email{celeste.damiani@unicaen.fr }
\author{Vincent Florens}
\address{%
Laboratoire de Math\'{e}matiques et leurs applications, UMR CNRS 5142\\
Universit\'{e} de Pau et des Pays de l'Adour\\
Avenue de l'Universit\'{e}\\
 BP 1155 64013 PAU Cedex France
}
\email{vincent.florens@univ-pau.fr}
\begin{document}

\begin{abstract}
Ribbon tangles are proper embeddings of tori and cylinders in the $4$-ball~$B^4$, ``bounding''~$3$-manifolds with only ribbon disks as singularities. We construct an Alexander invariant~$\mathsf{A}$ of ribbon tangles equipped with a representation of the fundamental group of their exterior in a free abelian group $G$. This invariant induces a functor in a certain category $\mathsf{R}ib_G$ of tangles, which restricts to the exterior powers of Burau-Gassner representation for ribbon braids, that are analogous to usual braids in this context. We define a circuit algebra $\mathsf{C}ob_G$ over the operad of smooth cobordisms, inspired by diagrammatic   planar   algebras introduced by Jones~\cite{J},   and prove that   the invariant $\mathsf{A}$ commutes with the compositions in this algebra. On the other hand,
 ribbon tangles admit diagrammatic representations, throught welded diagrams. We give a simple combinatorial description of $\mathsf{A}$ and of the algebra $\mathsf{C}ob_G$, and observe that our construction is a topological incarnation of the Alexander invariant of Archibald~\cite{Arc}.
 When restricted to diagrams without virtual crossings, $\mathsf{A}$ provides a purely local description of the usual Alexander poynomial of links, and extends the construction by Bigelow, Cattabriga and the second author~\cite{BCF}.
\end{abstract}

\maketitle

\section{Introduction}
A \emph{ribbon torus link} is a locally flat embedding of disjoint tori $S^1 \times S^1$ in the ball~$B^4$, bounding locally flat immersed solid tori $S^1 \times D^2$ whose singular sets are finite numbers of \emph{ribbon disks}. These elementary singularities are $4$-dimensional analogues of the classical notion of ribbon introduced by Fox~\cite{F}.
Ribbon torus links are considered up to ambient isotopy; the fundamental groups of their complement and  the derived Alexander modules provide topological invariants.

In this paper, we consider~\emph{ribbon tangles} that are ribbon proper embeddings of disjoint tori and cylinders in the ball~$B^4$. The intersection of a given tangle~$T$ with~$\partial B^4=S^3$ is a trivial link~$L$. For a given free abelian group~$G$, tangles are~\emph{colored} with a group homomorphism~$\varphi\colon H_1(B^4 \setminus T) \to G$.
We construct an \emph{Alexander invariant}~$\mathsf{A}$ of colored ribbon tangles, lying in the exterior algebra of
 the homology~$\mathbb{Z}[G]$-module~$H_1^\varphi(S^3 \setminus L)$, twisted by the morphism induced by~$\varphi$, see Definition~\ref{alex}. This invariant coincides with the Alexander polynomial in the case of tangles with only two boundary components, see Proposition~\ref{oneone}. The construction of~$\mathsf{A}$ is based on the \emph{Alexander function} introduced by Lescop~\cite{L}. The proof of the invariance and the main properties follow from algebraic and homological arguments developed in a paper by the second author and Massuyeau~\cite{FM}.

Ribbon colored tangles can be~\emph{splitted} into morphisms in a category~$\mathsf{R}ib_G$. We show that the invariant~$\mathsf{A}$ induces a functor from~$\mathsf{R}ib_G$ to the category of~$\mathbb{Z}[G]$-graded modules, see Theorem~\ref{ind}. In the case of \emph{ribbon braids}, that are
 the analogues of braids in this context, the functor coincides with the exterior powers of the~\emph{ad-hoc} colored Burau-Gassner representation, multiplied by a certain relative Alexander polynomial, see Proposition~\ref{burau}.

The multiplicativity of~$\mathsf{A}$ fits naturally in the context of planar algebras, introduced by Jones~\cite{J}.
We construct an algebra $\mathsf{C}ob_G$ over the colored~\emph{cobordisms} operad, see Theorem~\ref{cob}. The cobordisms are smooth cylinder in  $B^4$ with a finite collection of disjoint ``small'' balls removed, and the compositions are given by identification of some  boundary components. We observe that cobordisms act on tangles and that~$\mathsf{A}$ commutes with this action, see~Theorem~\ref{morph}.

Ribbon knotted objects admit representations as \emph{broken surfaces} via projections on the $3$-dimensional space, and through \emph{welded diagrams} which are representations in the $2$-dimensional space. These representations were first introduced by Yajima~\cite{Yaj} and  Satoh~\cite{S}. A complete description is given in the papers by Audoux, Bellingeri, Meilhan and Wagner~\cite{ABMW, Au}. This offers perspectives from both point of view: diagrams permit combinatorial computations of invariants of ribbon knotted objects, and ribbon knotted objects provide topological realizations of welded diagrams.
Moreover, a presentation of the group of ribbon knotted objects is obtained from the diagrams by a Wirtinger type algorithm, see~\cite{ABMW}. 

We give a diagrammatic description of the invariant~$\mathsf{A}$, see Theorem~\ref{weld} and show that it extends the multivariable Alexander polynomial of virtual tangles developed by Archibald and Bar-Natan. For futher details on these objects, see also~\cite{BS} and~\cite{P}.
 The  algebra~$\mathsf{C}ob_G$ appears as a topological incarnation of a circuit algebra of diagrams~$\mathsf{W}eld_\mu$, essentially introduced in~\cite{Arc}.
The compatibility of $\mathsf{A}$ with this structure allows local calculations in the diagram. If these diagrams do not have any virtual crossing, the construction holds for usual links, and we obtain a purely local description of the usual Alexander polynomial. This extends the construction of the Alexander representation by the second author, Bigelow and Cattabriga, see~\cite{BCF}.

  Our construction arises in the context of defining generalizations of Alexander polynomials to tangle-like objects, in which we can include, in addition to~\cite{BCF} concerning usual tangles and~\cite{Arc, P} concerning virtual tangles, also the works of Cimasoni and Turaev, through Lagrangian categories~\cite{CT}, 
Bigelow~\cite{Big} and Kennedy~\cite{K} which studied diagrammatical invariants of usual tangles, Sartori~\cite{Sar}, who defined quantum invariants of framed tangles, and Zibrowius in~\cite{ZIB}. This last one in particular defines an invariant for usual tangles which consists in a finite set of Laurent polynomials, and states, without explicit calculation, that on usual tangles, one can calculate Archibald's invariant from his set of invariants and vice versa.  However, nothing seems to appear in the literature about the $4$-dimensional case of ribbon tangles.

In Section~\ref{topo} we recall the definitions of ribbon tangles and construct the invariant~$\mathsf{A}$. Section \ref{ribbon} is devoted to the  {circuit} algebra $\mathsf{C}ob_G$ over the cobordism operad, and the properties of $\mathsf{A}$ with respect with the action of cobordisms on tangles. In Section~\ref{diagram} we describe the diagrammatic construction of $\mathsf{A}$ and a circuit algebra $\mathsf{W}eld_\mu$ related to~$\mathsf{C}ob_G$. In Section~\ref{ex} we compute some examples.

 \section{The Alexander invariant \texorpdfstring{$\mathsf{A}$}{}}
\label{topo}

In this section $G$ is a free abelian group, and $R$ is the group ring~$\Zz[G]$. 

\subsection{Ribbon tangles}
Let $m$ be a positive integer and $X$  a submanifold of the $m$-dimensional ball~$B^m$. An immersion $Y \subset X$ is \emph{locally flat} if and only if it is locally homeomorphic to a linear subspace $\Rr^k$ in $\Rr^m$ for some~$k \leq m$, except on $\partial X$ and/or $\partial Y$, where one of the $\Rr$ summands should be replaced by~$\Rr_+$.
An intersection $Y_1 \cap Y_2 \subset X$ is \emph{flatly transverse} if  it is locally homeomorphic to the intersection of two linear subspaces $\Rr^{k_1}$ and $\Rr^{k_2}$ in $\Rr^m$ for some positive integers $k_1, k_2 \leq m$ except on~$\partial X$, $\partial Y_1$ and/or~$\partial Y_2$, where one of the $\Rr$ summands is replaced by~$\Rr_+$.

An intersection $D=Y_1 \cap Y_2 \subset S^4$ is a \emph{ribbon disk} if it is homeomorphic to
the $2$-disk and satisfies: $D \subset \mathring{Y}_1$, $\mathring{D} \subset \mathring{Y}_2$ and $\partial D$ is an essential curve in~$\partial Y_2$. More details on ribbon knotted objects can be found in~\cite{ABMW, Au}. 

\begin{de} 
\label{ribtan}
Let $L$ be an oriented trivial link with $2n$ components in~$S^3= \partial B^4$.
A \emph{ribbon tangle} $T$ is a locally flat proper embedding  in $B^4$ of oriented disjoint annuli $S^1 \times I$ denoted $A_1,\dots,A_n$ and disjoint tori $S^1 \times S^1$ denoted $E_1,\dots, E_m$ such that:
\begin{enumerate}[label=\roman*)]
\item There exist locally flat immersed solid tori $F_i$ for $i=1,\dots,m$ such that $\partial F_i = E_i$.
\item $\partial A_i \subset L $ and the orientation induced by $A_i$ on $\partial A_i$ coincides with the given orientation of the two components of~$L$.
\item There exists $n$ locally flat immersed $3$-balls $B_i \simeq B^2 \times I$ whose singular set are finite number of ribbon disks and such that, for all $i \in \{1, \dots, n \}$:
\[
\partial B_i = A_i \cup_\partial (B^2 \times \{0,1 \}).
\] 
\end{enumerate}
\end{de}

A $G$-\emph{colored ribbon tangle} is a pair $(T,\varphi)$ where $T$ is a ribbon tangle with complement~$X_T=B^4 \setminus T$, equipped with a group homomorphism~$\varphi\colon H_1(X_T) \to G$.  
\subsection{Definition of \texorpdfstring{$\mathsf{A}$}{}}
\label{upsilon}

Let $(X,Y)$ be a pair of topological spaces. Denote $p: \hat{X} \rightarrow X$ the maximal abelian cover. For a ring homomorphism
 $\varphi: \mathbb{Z}[H_1(X)] \rightarrow R$, we define the twisted chain complex 
\begin{equation*}
C^\varphi(X, Y)= C(\hat{X}, p^{-1}(Y)) \otimes_{\Zz[H_1(X)]} R 
\end{equation*}
whose homology is denoted~$H_*^\varphi(X,Y;R)$, or simply~$H_*^\varphi(X,Y)$.

Let $(T,\varphi)$ be a $G$-colored ribbon tangle. 
The homomorphism $\varphi$ extends to a ring homomorphism $\varphi \colon \mathbb{Z} [H_1(X_T)] \to R$. 
 For the rest of this section, we set $H=H_1^\varphi(X_T,\ast)$.

\begin{propo} 
\label{pres}
The $R$-module $H$  admits a  presentation with deficiency~$n$.
\end{propo}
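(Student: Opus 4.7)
Plan: use a diagrammatic representation of $T$ via a welded diagram $D$, read off a Wirtinger-type presentation of $\pi_{1}(X_{T})$, translate it into a free $R$-module presentation of $H$ via Fox calculus, and count arcs against classical crossings.

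First, by the welded-diagram / broken-surface description of ribbon tangles~\cite{S, ABMW} invoked in the introduction, fix a diagram $D$ of $T$. From $D$ one reads a Wirtinger-type presentation of $\pi_{1}(X_{T})$: one generator per arc and one relator per classical crossing; denote their cardinalities by $g$ and $r$. The associated relative $2$-complex (one $0$-cell $\ast$, $g$ one-cells, $r$ two-cells) maps into $X_{T}$ by an isomorphism on $\pi_{1}$. Lifting to the maximal abelian cover and tensoring with $R$ along $\varphi$ produces a free $R$-module complex
\[
R^{r}\xrightarrow{\partial_{2}}R^{g}\longrightarrow 0
\]
whose $\partial_{2}$ is the Jacobian of the Wirtinger relators by Fox derivatives evaluated through $\varphi$, and whose cokernel is $H$. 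This is a free presentation of $H$ with $g$ generators and $r$ relations.

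Second, count. A classical crossing of $D$ splits its under-strand into two arcs, contributing $+1$ to $g$ and $+1$ to $r$, hence $0$ to $g-r$. After erasing every classical crossing one is left with one arc per open strand and one arc per closed component: the $n$ annuli of $T$ contribute $n$ open strands, and if $m_{0}$ denotes the number of closed components of $D$ carrying no classical self-crossing, then $g-r=n+m_{0}$. Appending $m_{0}$ redundant (zero) relators to the presentation reduces the deficiency to exactly $n$, proving the proposition.

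The main subtle point is the identification of the cokernel of $\partial_{2}$ with the relative twisted $H_{1}^{\varphi}(X_{T},\ast)$: one must verify that the Wirtinger $2$-complex actually computes this module for the ambient $4$-manifold $X_{T}$, not merely the group-theoretic $H_{1}(\pi_{1}(X_{T});R_{\varphi})$, since in principle higher-dimensional handles of $X_{T}$ could contribute. The ribbon hypothesis on $T$ -- the immersed $3$-balls $B_{i}$ and solid tori $F_{j}$ bounded by the components of $T$, with only ribbon disks as singularities -- is precisely what ensures the appropriate $2$-equivalence between the Wirtinger model and $X_{T}$ in the range needed to compute $H_{1}^{\varphi}(X_{T},\ast)$. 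This is the core homological input we would draw from the framework of~\cite{FM}.
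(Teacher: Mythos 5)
Your proof is correct and follows essentially the same route as the paper's: the paper also passes to a welded diagram via the Tube map, uses the Satoh--Yajima identification of $\pi_1(B^4\setminus T)$ with the Wirtinger group $\pi(\tau)$ (Proposition~\ref{wirt}) and Fox calculus to exhibit $M^\psi(\tau)$ as a presentation matrix of $H_1^\varphi(X_T,\ast)$, and then counts arcs against rows (Remark~\ref{size}) to get deficiency $n$. The only cosmetic difference is the bookkeeping: the paper normalizes the diagram with interrupting points on boundary arcs so the matrix has size exactly $(p+n)\times(p+2n)$, whereas you pad with trivial relators for crossing-free closed components.
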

We postpone the proof of Proposition~\ref{pres} to the end of Section~\ref{SS:AlexMatrix}.
Consider a presentation of $H$ of the form:
\begin{equation*}
\label{E:presentation}
H  = \langle \gamma_1, \dots, \gamma_{n+q} \mid r_1, \dots, r_q \rangle.
\end{equation*}
Let $\Gamma$ be the free $R$-module generated by~$\langle \gamma_1, \dots, \gamma_{n+q} \rangle$: the relators $r_1, \dots, r_q$ are words in these generators in $\Gamma$. Let us denote $r = r_1 \wedge \cdots \wedge r_q$ and $\gamma=\gamma_1 \wedge \cdots \wedge \gamma_{n+q}$.
The Alexander function $\mathcal{A}_{T}^\varphi \colon \largewedge^n H \to R$ is the $R$-linear map defined by
\[
 r \wedge \tilde{u} = \mathcal{A}_{T}^\varphi (u) \cdot \gamma
\]
for all $u =u_1 \wedge \cdots \wedge  u_n \in \largewedge^n H$, where $\tilde{u}_1,\cdots,\tilde{u}_n$ are arbitrary lifts in $\Gamma$ and $\tilde{u}= \tilde{u}_1 \wedge \cdots \wedge \tilde{u}_n$. Different $n$-deficient presentations will give rise to Alexander functions that differ only by multiplication by a unit in~$R$. Note that if $H$ is free of rank~$n$, then $\mathcal{A}_{T}^\varphi$ is a volume form. 

Let us consider the $q \times (q+n)$ matrix defined by the presentation of~$H$. If one adds to this matrix the row vectors giving $u_1, \dots , u_n$ in the generators $\gamma_1, \dots, \gamma_{q+n}$, then $\mathcal{A}_{T}^\varphi(u)$ is the determinant of the resulting $(q+n) \times (q+n)$ matrix.

\begin{ex} \label{ex1}
Suppose that $G$ has rank $2$ and is generated by $t_1,t_2$. Consider the module $H$ whose presentation has generators~$\gamma_1,\dots,\gamma_4$ and two relations given by the matrix:
\[
\begin{pmatrix}
 -1 & 0 & 1  & 0  \\
 0 & -1 & 1-t_1  & t_2  \\
\end{pmatrix}
\] 

The values of the Alexander function $\mathcal{A}\colon \largewedge^2 H \to R$ are 
\begin{gather*}
 \mathcal{A}(\gamma_1 \wedge \gamma_2)= t_2, \ \mathcal{A}(\gamma_1 \wedge \gamma_3)=0, \ \mathcal{A}(\gamma_1 \wedge \gamma_4)=1,\\
 \mathcal{A}(\gamma_2 \wedge \gamma_3)= -t_2 , \ \mathcal{A}(\gamma_2 \wedge \gamma_4)= t_1-1 , \ \mathcal{A}(\gamma_3 \wedge \gamma_4)= 1.
\end{gather*}
\end{ex}

Let $H_\partial=H_1^\varphi(S^3 \setminus L, \ast)$, which is the free $R$-module of rank~$2n$, generated by the meridians of~$L$.
Let $m_\partial\colon H_\partial \to H$ be induced by the inclusion map~$S^3 \setminus L \hookrightarrow X_T$. For short, for a given $z \in \largewedge^n H_\partial$, we use the notation $m_\partial z$ for $\wedge^n m_\partial (z)$. 

\begin{de} \label{alex}
The element $\mathsf{A}(T,\varphi)$ of $\largewedge^n H_\partial$ is the (colored) isotopy invariant defined by the following property:
\begin{equation} \label{deftangle}
\forall z \in \largewedge^n H_\partial,  \  \mathcal{A}_{T}^\varphi (m_\partial z)= \omega_\partial (\mathsf{A}(T,\varphi) \wedge z)
\end{equation}
where $\omega_\partial$ is a volume form on~$H_\partial$.
\end{de}

\subsection{The Alexander polynomial of a \texorpdfstring{$(1-1)$-tangle}{}} \label{Al}

Given a finitely generated $R$-module~$H$, and $k \geq 0$, the \emph{$k$-th Alexander polynomial} of $H$ is the greatest common divisor of all minors of order $(m-k)$ in a $q \times m$ presentation matrix of~$H$. This invariant of~$H$, denoted $\Delta_k (H) \in R$, is defined up to multiplication by a unit of~$R$.  

\begin{de} 
The \emph{Alexander polynomial} $\Delta^\varphi(T) \in R$ of a $G$-colored ribbon tangle $(T,\varphi)$ is $\Delta_0 \left( H_1^\varphi(X_T) \right)$. 
\end{de}

Similarly to classical knot theory, $T$ is a $(1-1)$-\emph{tangle} if~$n=1$. The components of $T$ in $B^4$ consist of $m$ tori and a cylinder whose boundary is a $2$-component trivial link $L$ in~$S^3 = \partial B^4$. Let $x_1$ and $x_2$ be the meridians of the components of~$L$.
Note that in~$X_T$, both $x_1$ and $-x_2$ are homologous to the meridian $x$ of the cylinder. We use the same notations $x_1$ and $x_2$ for the homology classes of their lifts in~$H_1^\varphi(S^3 \setminus L,\ast)$.

\begin{propo} \label{oneone}
Let $(T,\varphi)$ be a $G$-colored $(1-1)$-tangle, such that $\varphi$ is not trivial. Denote~$t=\varphi(x)$ and let~$r$ be the rank of~$\varphi(H_1(X_T))$.
Then the element $\mathsf{A}(T,\varphi)$ of $H_\partial$ is given by 
\[
\mathsf{A}(T,\varphi)= 
\begin{cases}
(t-1) \Delta^\varphi(T) \cdot (x_1-x_2) & \hbox{if } r \geq 2,\\
 \Delta^\varphi(T) \cdot (x_1-x_2) & \hbox{if } r=1.
\end{cases}
\]
\end{propo}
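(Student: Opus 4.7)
First I unpack the defining equation~\eqref{deftangle} of $\mathsf{A}(T,\varphi)$ in the case $n=1$. Since $H_\partial = R x_1 \oplus R x_2$ is free of rank two and $\largewedge^1 H_\partial = H_\partial$, normalizing the volume form by $\omega_\partial(x_1 \wedge x_2) = 1$ and writing $\mathsf{A}(T,\varphi) = a x_1 + b x_2$, the defining equation tested at $z = x_1$ and $z = x_2$ yields
\[
b = -\mathcal{A}_T^\varphi(m_\partial x_1), \qquad a = \mathcal{A}_T^\varphi(m_\partial x_2).
\]
The statement therefore reduces to computing the $R$-linear map $\mathcal{A}_T^\varphi \circ m_\partial \colon H_\partial \to R$ on the two meridians, and checking that the two values agree up to the appropriate sign and unit so that $\mathsf{A}(T,\varphi)$ is proportional to $x_1 - x_2$ with the stated coefficient.

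Second, I relate $H = H_1^\varphi(X_T, \ast)$ to $H_1^\varphi(X_T)$ via the long exact sequence of the pair $(X_T, \ast)$ with twisted coefficients:
\[
0 \longrightarrow H_1^\varphi(X_T) \longrightarrow H \longrightarrow J \longrightarrow 0,
\]
where $J = (t_1 - 1, \dots, t_r - 1) \subset R$ is the augmentation ideal of the image $\varphi(H_1(X_T))$. The connecting homomorphism sends $m_\partial x_1$ to $t-1$ and $m_\partial x_2$ to $t^{-1} - 1 = -t^{-1}(t-1)$, so an explicit $R$-combination of $m_\partial x_1$ and $m_\partial x_2$ lies in the image of $H_1^\varphi(X_T) \hookrightarrow H$. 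Combined with the $R$-linearity of $\mathcal{A}_T^\varphi$ and the vanishing of $\mathcal{A}_T^\varphi$ on classes coming from $H_1^\varphi(X_T)$ (for the presentation constructed below), this produces the required proportionality between the two Alexander function values.

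Third, I construct a deficiency-one presentation of $H$ adapted to this extension. Starting from a square presentation $M$ of $H_1^\varphi(X_T)$ (whose determinant equals $\Delta^\varphi(T)$ up to a unit), I adjoin $\gamma_0 = m_\partial x_1$ as an extra generator. When $r = 1$ the ideal $J \cong R$ is free, the extension splits as $R$-modules, and no new relation is needed, so the resulting presentation matrix of $H$ is block-upper-triangular with $M$ as its essential block. When $r \ge 2$, the ideal $J$ carries Koszul-type relations; adding a lift $\eta$ of another generator $t_i - 1$ of $J$ and writing the Koszul identity in $H$ as $(t_i - 1)\gamma_0 - (t - 1)\eta \equiv w$ with $w \in H_1^\varphi(X_T)$ contributes an additional relation row whose entries in the $\gamma_0$ and $\eta$ columns are $t_i - 1$ and $-(t-1)$ respectively.

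Fourth, the determinant prescription for the Alexander function at $\gamma_0 = m_\partial x_1$ amounts to adjoining the basis vector $e_0$ as a new row of the presentation matrix and computing the resulting $(q+1)\times(q+1)$ determinant. Expansion along this row reduces to a minor of the presentation matrix, and the block structure isolates the factor $\det M = \Delta^\varphi(T)$ multiplied by the $J$-block contribution: $1$ when $r = 1$, and exactly $(t-1)$ when $r \ge 2$ (coming from the $-(t-1)$ entry in the $\eta$ row). Combined with Step~2 and the formula for $a, b$ in Step~1, this yields the two claimed formulas. The main obstacle is Step~3: assembling the deficiency-one presentation with the precise block structure needed for the factorization in Step~4, using the extension together with the explicit presentation of $J$. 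Careful tracking of signs and units throughout, so that the final expression comes out exactly as $\Delta^\varphi(T)\,(x_1 - x_2)$ or $(t-1)\Delta^\varphi(T)\,(x_1-x_2)$ rather than a unit multiple, is a secondary but delicate bookkeeping task.
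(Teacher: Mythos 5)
Your Steps 1 and 2 are sound and agree with the paper's reduction: everything comes down to evaluating $\mathcal{A}_T^\varphi$ on the meridian $x=m_\partial x_1$, and the long exact sequence of $(X_T,\ast)$ identifies the connecting map $\partial_\ast\colon H\to H_0^\varphi(\ast)\simeq R$ as the augmentation, with image the ideal $J=(t_1-1,\dots,t_r-1)$. The genuine gap is Step 3, exactly where you flag the ``main obstacle.'' First, $H_1^\varphi(X_T)$ does not in general admit a square presentation: $\Delta^\varphi(T)=\Delta_0\bigl(H_1^\varphi(X_T)\bigr)$ is the \emph{gcd} of the maximal minors of a (typically non-square) presentation matrix, not a determinant, so the block-triangular factorization ``$\det M$ times the $J$-block'' in Step 4 is not available. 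Second, your treatment of the quotient $J$ only covers $r=2$: for $r\geq 3$ the ideal $J$ needs $r$ generators and $\binom{r}{2}>r-1$ Koszul relations, so stacking a presentation of the submodule with one of the quotient does not produce a presentation of $H$ of deficiency one, and there is no single $-(t-1)$ entry whose cofactor expansion yields the claimed factor. (Indeed, for $r\geq 3$ the non-existence of such a block presentation of deficiency one already shows the construction cannot be carried out as described.)

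The paper circumvents both problems by never building an adapted presentation. If $\mathcal{A}^\varphi\neq 0$ then some maximal minor of the $q\times(q+1)$ presentation matrix is nonzero, forcing $\mathrm{rk}\,H=1$; hence $H\otimes_R QR$ is one-dimensional and the two functionals $\mathcal{A}^\varphi$ and $\partial_\ast$ are automatically proportional over the fraction field $QR$. The proportionality constant is then pinned down by comparing gcds of values on $H$: on one side $\gcd\mathcal{A}^\varphi(H)=\gcd_i\det(A_i)=\Delta_1(H)=\Delta^\varphi(T)$, and on the other $\gcd\partial_\ast(H)=\gcd(t_1-1,\dots,t_r-1)$, which equals $t-1$ when $r=1$ and $1$ when $r\geq 2$. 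This gcd comparison is precisely what replaces your determinant factorization, and it is where the dichotomy $r=1$ versus $r\geq 2$ actually enters; note it works uniformly in $r$ and requires no structural information about the extension $0\to H_1^\varphi(X_T)\to H\to J\to 0$ beyond the rank count. If you want to keep your more explicit strategy, you would at minimum need to replace ``$\det M$'' by a gcd over all maximal minors and handle the syzygies of $J$ for arbitrary $r$, at which point you have essentially reconstructed the paper's argument.
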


It is worth noticing that, up to a unit in~$R$, the result is independent of the order chosen on the components of~$L$.
For the reader's convenience, we give a short proof of Proposition~\ref{oneone}. More detailed arguments can be found in~\cite[Section 3]{FM}.

\begin{proof} 
Denote~$H=H_1^\varphi(X_T,\ast)$.
From the long exact sequence of the pair~$(X_T,\ast)$:
\[
\begin{CD}
0    @>>>   H_1^\varphi(X_T)  @>>> H @>>> H_0^\varphi(\ast) @>>> H_0^\varphi(X_T)  @>>>0\\
\end{CD}
\]

we deduce that $\mathrm{Tors} H_1^\varphi(X_T) \simeq  \mathrm{Tors} H$. Moreover,
 $ \mathrm{rk } H =  \mathrm{rk } H_1^\varphi(X_T) +1$. This implies that
\[\Delta^\varphi(T)= \Delta_1( H).\]
Let $A$ be the matrix of a presentation of $H=\langle \gamma_1, \dots, \gamma_{q+1} \mid r_1, \dots, r_q \rangle$, and $\mathcal{A}^\varphi$ be the related Alexander function. We have
\[
\forall z_1,\ldots,z_{q+1} \in R,\ \mathcal{A}^\varphi(z_1 \gamma_1 + \cdots + z_{q+1} \gamma_{q+1}) = \sum_{i=1}^{q+1} \det(A_i) z_i,
\]
 where $A_i$ is $A$ with the $i$-th column removed.
Hence, 
\[
\Delta_1(H) = \gcd \mathcal{A}^\varphi(H)= \gcd \{ \mathcal^\varphi(h); \, h \in H \}.
\]
 If $\mathcal{A}^\varphi=0$, then~$\Delta_\varphi(T)=0$.
Consider now the connecting homomorphism~$\partial_\ast \colon H \to H_0^\varphi(\ast) \simeq R$.
 If $\mathcal{A}^\varphi \neq 0$, then rank $(H)=1$ and two linear maps
 $H \otimes_R \mathbb{Q} R \to \mathbb{Q} R$ are linearly dependent, where $\mathbb{Q} R$ is the fraction field of~$R$. 
 Then there exist elements~$P,Q$ in $R$ such that for each~$h \in H$,  $\mathcal{A}^\varphi(h)= (P/Q) \partial_\ast(h)$. Hence
\[\mathcal{A}^\varphi(h)= \Delta^\varphi(T) \cdot \frac{\partial_\ast (h)}{\gcd \partial_\ast(H)}.\]
For a loop $\gamma$ based in $\ast$ with lift $\hat{\gamma}$, one has $\partial_\ast \hat{\gamma}= \varphi(\gamma) -1$. Hence,
$
\gcd \partial_\ast(H)$ is equal to $1$ if $r \geq 2$ and is equal to $t-1$ if~$r =1$.
We deduce that for all $h \in H$
\[
\mathcal{A}^\varphi(h) =
\begin{cases}
(t-1) \Delta^\varphi(T) & \mbox{ if }  r \geq 2,\\
 \Delta^\varphi(T)  & \mbox{ if }  r =1.
\end{cases}
\]
Let $\omega_\partial$ be the volume form on $H_\partial$ relative to the choice of the meridians~$x_1,x_2$.
By definition, $\mathsf{A}(T,\varphi)$ verifies $\mathcal{A}_T^\varphi(m_\partial z)= \omega_\partial(\mathsf{A}(T,\varphi) \wedge z)$. Since $m_\partial(x_1)=x$ and $m_\partial(x_2)= -x$, we obtain the result from 
\[
\mathcal{A}^\varphi(x)= \omega_\partial(\mathsf{A}(T,\varphi) \wedge x_1)= -\omega_\partial(
 \mathsf{A}(T,\varphi) \wedge x_2).
\]
\end{proof}

\subsection{The Burau functor}
\label{SS:Burau}

Let  $\mathcal{R}ib$ be the category of ribbon cobordisms.
The objects are sequences $\varepsilon$ of signs $\pm 1$ of length~$n$, and correspond to trivial links with $n$ components in~$B^3$, such that a sign is affected to each component.
 The morphisms $\varepsilon_0 \to \varepsilon_1$
 are the equivalence classes of ribbon cobordisms between $L_0$ and~$L_1$. More precisely, a \emph{ribbon cobordism} from $\varepsilon_0$ to $\varepsilon_1$ is a  collection $S$  of  ribbon annuli and tori in $B^3 \times I$ whose boundaries are the components of $L_0$ and~$L_1$, with according signs. Two cobordisms are equivalent if there is an ambient isotopy fixing the boundary circles $L_0$ and~$L_1$.  The \emph{degree} of the ribbon cobordism $S$ is~$\delta=(n_1 - n_0)/2$. The composition $S \circ S'$ of two ribbon cobordisms $S$ and $S'$ in $\mathcal{R}ib$ is defined by identifying $(B^3 \times \{1 \}, S')$ to~$(B^3 \times \{0 \}, S)$.

The category $\mathcal{R}ib$ can be refined to the category of colored ribbon cobordisms~$\mathcal{R}ib_G$.
 An object is a pair $(\varepsilon, \varphi)$, where $\varphi\colon H_1(B^3 \setminus L) \to G$. A morphism $(\varepsilon_0,\varphi_0) \to (\varepsilon_1,\varphi_1)$ is a pair $(S,\varphi)$ such that $\varphi \circ m_i= \varphi_i$ for~$i=1,2$, if $m_i$ are induced by the inclusions~$X_{L_i} \hookrightarrow X_S$, where $X_{L_i}= (B^3 \times \{i\}) \setminus L_i$ and~$X_S= (B^3 \times I) \setminus S$. 

 We define a projective functor $\rho$ from the category $\mathcal{R}ib_G$  to the category of $\mathbb{Z}$-graded $R$-modules~$gr\mathcal{M}od_G$,
whose morphisms are graded $R$-linear maps of arbitrary degree, up to multiplication by
an element of~$\pm G$.
Let $(\varepsilon,\varphi)$ be an object of~$\mathcal{R}ib_G$. The image of $(\varepsilon,\varphi)$ by $\rho$ is
\[ \rho(\varepsilon,\varphi)= \largewedge H_1^\varphi(X_L,\ast;R), \]
 the exterior algebra of the free $R$-module $H_1^\varphi(X_L,\ast;R)$ of rank~$n$, where $\ast$ is a base point in~$\partial B^3$. 
  Next, to a morphism $(S,\varphi)\colon (\varepsilon_0,\varphi_0) \to (\varepsilon_1,\varphi_1)$, we associate 
 a $R$-linear map \[\rho(S,\varphi)\colon \largewedge  H_1^{\varphi_0}(X_{L_0} ,\ast;R) \to \largewedge  H_1^{\varphi_1}(X_{L_1},\ast;R),\]
 of degree $\delta$ as follows.
Let $J$ be the interval in $\partial B^3 \times I$, which connects the base points of the bottom and top balls. 
We denote the $R$-modules $M_0=H_1^{\varphi_0}(X_{L_0},\ast;R)$ and~$M_1=H_1^{\varphi_1}(X_{L_1},\ast;R)$.  The module $H_1^\varphi(X_S,J;R)$ admits a presentation with deficiency $d=(n_0 + n_1)/2$. Denote the Alexander function $\mathcal{A}_S^\varphi  \colon \largewedge^{d} H_1^\varphi(X_S,J;R) \to R$.
 For any integer~$k \geq 0$, the image  $\rho(S,\varphi)(x) \in \largewedge^{k+ \delta}  M_1$ of any $x \in \largewedge^k  M_0$ is defined by the following property
\[ \forall y \in \largewedge^{d-k} M_1, \ \mathcal{A}_S^\varphi(\largewedge^k m_0 (x) \wedge
 \largewedge^{d-k} m_1(y)) = \omega_1 \left( \rho(S,\varphi)(x) \wedge y \right),\]
where $\omega_1$ is a volume form on~$M_1$.

\begin{thm} \label{funct}
The map $\rho$ is a degree preserving functor $\mathcal{R}ib_G \to gr\mathcal{M}od_G$. 
\end{thm}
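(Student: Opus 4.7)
The plan is to verify three conditions: that $\rho(S,\varphi)$ is a well-defined $R$-linear map of graded degree $\delta$, that $\rho$ sends identity cobordisms to identities, and that $\rho(S'\circ S,\varphi) = \rho(S',\varphi) \circ \rho(S,\varphi)$ up to a unit of $\pm G$. Well-definedness reduces to the observation that the volume form induces a perfect pairing $\largewedge^{k+\delta} M_1 \otimes_R \largewedge^{d-k} M_1 \to R$, since $M_1$ is free of rank $n_1 = (k+\delta) + (d-k)$; hence the defining equation determines $\rho(S,\varphi)(x) \in \largewedge^{k+\delta} M_1$ uniquely. Changing the deficiency-$d$ presentation of $H_1^\varphi(X_S,J;R)$ alters $\mathcal{A}_S^\varphi$ only by a unit of $R$, and changing $\omega_1$ only by $\pm 1$; both ambiguities lie in $\pm G$, as allowed in $gr\mathcal{M}od_G$.

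For the identity cobordism on $(\varepsilon,\varphi)$, the exterior retracts onto $X_L$, both inclusions $m_0$ and $m_1$ coincide with the identity of $M = H_1^\varphi(X_L,\ast;R)$, and one can choose a deficiency-$n$ presentation of $H_1^\varphi(X_S,J;R) \simeq M$ whose Alexander function is the volume form $\omega$. With $\omega_1 = \omega$, the defining equation reduces to $\omega(x \wedge y) = \omega(\rho(\mathrm{id})(x) \wedge y)$, which forces $\rho(\mathrm{id}) = \mathrm{id}$ and $\delta = 0$.

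The principal obstacle is compatibility with composition. Given $S\colon (\varepsilon_0,\varphi_0) \to (\varepsilon_1,\varphi_1)$ and $S'\colon (\varepsilon_1,\varphi_1) \to (\varepsilon_2,\varphi_2)$, the exterior $X_{S'\circ S}$ is the pushout of $X_S$ and $X_{S'}$ along $X_{L_1}$, so the $\varphi$-twisted Mayer--Vietoris sequence of the pair $(\cdot,J)$ yields an exact fragment
\[
H_1^\varphi(X_{L_1},\ast) \to H_1^\varphi(X_S,J) \oplus H_1^\varphi(X_{S'},J) \to H_1^\varphi(X_{S'\circ S},J) \to 0.
\]
Concatenating a deficiency-$d_S$ presentation of $H_1^\varphi(X_S,J;R)$ with a deficiency-$d_{S'}$ presentation of $H_1^\varphi(X_{S'},J;R)$ and adjoining the $n_1$ relations identifying the two images of a basis of $M_1$ produces a presentation of $H_1^\varphi(X_{S'\circ S},J;R)$ of deficiency $d_S + d_{S'} - n_1 = (n_0+n_2)/2$, matching the count required for $\rho$ to be defined on the composite. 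Laplace expansion of the associated Alexander determinant along the block of $n_1$ middle columns gives an identity of the shape
\[
\mathcal{A}_{S'\circ S}^\varphi\bigl(\largewedge^k m_0(x) \wedge \largewedge^{d-k} m_2(y)\bigr) = \sum_{I} \pm\, \mathcal{A}_S^\varphi\bigl(\largewedge^k m_0(x) \wedge \largewedge^{d_S-k} m_1(e_I)\bigr)\, \mathcal{A}_{S'}^\varphi\bigl(\largewedge^{k+\delta_S} m_1'(e_I^\vee) \wedge \largewedge^{d-k} m_2(y)\bigr),
\]
where $\{e_I\}$ runs over a basis of $\largewedge^{d_S-k} M_1$ with $\omega_1$-dual basis $\{e_I^\vee\}$ of $\largewedge^{k+\delta_S} M_1$, and $m_1'$ denotes the bottom inclusion $X_{L_1} \hookrightarrow X_{S'}$. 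Substituting the defining equations for $\rho(S,\varphi)$ and $\rho(S',\varphi)$ and invoking the Poincar\'e expansion $\rho(S,\varphi)(x) = \sum_J \omega_1(\rho(S,\varphi)(x) \wedge e_J)\, e_J^\vee$, the right-hand side collapses to $\omega_2(\rho(S',\varphi)(\rho(S,\varphi)(x)) \wedge y)$. Matching against the defining equation for $\rho(S'\circ S,\varphi)(x)$ yields the functoriality, up to the expected unit of $\pm G$. This Laplace-gluing identity for Alexander functions is the technical core of the argument, and is a direct application of the homological framework of~\cite{FM}.
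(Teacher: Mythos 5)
Your argument is correct and is essentially the argument the paper intends: the paper does not write out a proof of Theorem~\ref{funct} but delegates it ``word by word'' to \cite[Theorem~I]{FM}, and the strategy you use --- uniqueness of $\rho(S,\varphi)(x)$ from the perfect pairing $\omega_1$ on the free module $M_1$, the identity cobordism giving the identity map, and functoriality via a Mayer--Vietoris gluing of deficiency-$d$ presentations with $n_1$ identification relations followed by a Laplace/dual-basis expansion of the Alexander determinant --- is exactly the technique the paper itself carries out in detail for the analogous Theorems~\ref{cob} and~\ref{morph}. No gaps beyond the sign bookkeeping you correctly absorb into the $\pm G$ ambiguity.
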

The proof of Theorem~\ref{funct} follows word by word the proof of~\cite[Theorem I]{FM}. Note that the deficiency of the presentation of $H_1^\varphi(X_S,J;R)$ depends only on $n_0$ and~$n_1$.

A $G$-colored \emph{ribbon tube} $(S,\varphi)$ is a morphism such that the inclusions $m_1$ and $m_2$ induce isomorphisms in homology (with integer coefficients). Ribbon tubes are analogous to string links; the links $L_0$ and $L_1$ have the same number of components, and $S$ has no toric component. For a fixed~$\varphi$, the set of $G$-colored ribbon tubes form a monoid~$\mathcal{T}_\varphi$. 
Following~\cite[Proposition 2.1]{KLW}, one proves that $m_1$ and $m_2$ induce isomorphisms
\[ (m_i)_\ast \colon H^\varphi(X_{L_i}; QR) \longrightarrow H^\varphi(X_{S}; QR), \,  \mbox{for } i=1,2 
\]
  where $QR$ is the quotient field of~$R$. Set $H^\varphi$ to be~$H^\varphi(X_{L_1}; QR)=H^\varphi(X_{L_2}; QR)$. Set $L=L_0=L_1$.
 The composition $m_2^{-1} \circ m_1$ is an automorphism of~$H^\varphi(X_{L}; QR)$.

\begin{de} 
The \emph{colored Burau representation} is the monoid homomorphism \[r^\varphi \colon\mathcal{T}_\varphi \to  \mathrm{Aut}(H^\varphi).\]
\end{de}

Let $\Delta^\varphi(X_T,X_{L_2})= \Delta_0 \left( H_1^\varphi(X_T, X_{L_2}; R) \right)$ be the Alexander polynomial of the pair $(X_T,X_{L_2})$, see Section \ref{Al}.

\begin{propo} \label{burau}
For any $G$-colored ribbon tube $(S,\varphi) \in \mathcal{T}_\varphi$, we have
 \[ \rho(S,\varphi) = \Delta^\varphi(X_T,X_{L_2}) \cdot  \largewedge r^\varphi \, \colon \largewedge H^\varphi \to \largewedge H^\varphi.\]
\end{propo}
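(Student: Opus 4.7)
The plan is to reduce the statement to a single algebraic identity — that the Alexander function $\mathcal{A}_S^\varphi$ of the tube, evaluated on the top volume element, computes the relative Alexander polynomial $\Delta^\varphi(X_S, X_{L_2})$ — and then to use the $QR$-level factorization $m_1 = m_2 \circ r^\varphi$ to extract the Burau factor at every exterior degree $k$.

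Concretely, I would fix a deficiency-$n$ presentation $\langle \gamma_1, \ldots, \gamma_{n+q} \mid r_1, \ldots, r_q\rangle$ of $H = H_1^\varphi(X_S, J; R)$ provided by Proposition~\ref{pres}, a basis $(e_1, \ldots, e_n)$ of $M_2 = H_1^\varphi(X_{L_2}, \ast; R)$ with associated volume form $\omega_2$, and study the $R$-linear map $\mathcal{A}_S^\varphi \circ \largewedge^n m_2 \colon \largewedge^n M_2 \to R$. Since $\largewedge^n M_2$ is free of rank one, this map is multiplication by a scalar $\lambda \in R$, and the key lemma to establish is
\begin{equation*}
\lambda = \Delta^\varphi(X_S, X_{L_2}) \quad \text{up to a unit of } R.
\end{equation*}
To prove it, I would write $m_2(e_i) = \sum_j a_{ji} \gamma_j$ and form the square $(n+q) \times (n+q)$ matrix $M$ whose top $q$ rows are the relator coefficients and whose bottom $n$ rows are the $(a_{ji})$'s. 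By the determinantal description of the Alexander function recalled after Definition~\ref{alex}, $\lambda = \det M$ up to a unit. On the other hand, $M$ is a presentation matrix of $\mathrm{coker}(m_2 \colon M_2 \to H)$, which I identify with $H_1^\varphi(X_S, X_{L_2}; R)$ by applying the long exact sequence of the triple $(X_S, X_{L_2} \cup J, J)$: the pair $(X_{L_2} \cup J, J)$ deformation retracts onto $(X_{L_2}, \ast)$ because $J$ meets $X_{L_2}$ only at the top base point, so $H_1^\varphi(X_{L_2} \cup J, J) \simeq M_2$ with outgoing map $m_2$, and $H_0^\varphi(X_{L_2} \cup J, J) = H_0^\varphi(X_{L_2}, \ast) = 0$. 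Since the presentation is square, $\det M = \Delta_0(\mathrm{coker}\, m_2) = \Delta^\varphi(X_S, X_{L_2})$ up to a unit.

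Given the key lemma, for $x \in \largewedge^k M_1$ and $y \in \largewedge^{n-k} M_2$ I compute, over $QR$ using $m_1 = m_2 \circ r^\varphi$,
\begin{align*}
\mathcal{A}_S^\varphi\bigl(\largewedge^k m_1(x) \wedge \largewedge^{n-k} m_2(y)\bigr)
 &= \mathcal{A}_S^\varphi \circ \largewedge^n m_2\bigl(\largewedge^k r^\varphi(x) \wedge y\bigr) \\
 &= \Delta^\varphi(X_S, X_{L_2}) \cdot \omega_2\bigl(\largewedge^k r^\varphi(x) \wedge y\bigr).
\end{align*}
Comparing with the defining equation $\omega_2(\rho(S,\varphi)(x) \wedge y)$ and letting $y$ vary, non-degeneracy of $\omega_2$ forces $\rho(S,\varphi)(x) = \Delta^\varphi(X_S, X_{L_2}) \cdot \largewedge^k r^\varphi(x)$ in $\largewedge^k M_2 \otimes QR$. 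The equality automatically lives in $\largewedge^k M_2$ because $\rho(S,\varphi)(x)$ is integral by construction, so the scalar $\Delta^\varphi(X_S, X_{L_2})$ clears the denominators introduced by $r^\varphi = m_2^{-1} \circ m_1$.

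I expect the main difficulty to be the key lemma: establishing the vanishing of the connecting term in the long exact sequence with the correct handling of the arc $J$ and the base-point choice, and verifying cleanly that the augmented matrix $M$ really is a presentation of $H_1^\varphi(X_S, X_{L_2}; R)$ rather than of some auxiliary quotient. Both points parallel the algebraic machinery of \cite[Section~3]{FM}, which I would reuse rather than redo by hand.
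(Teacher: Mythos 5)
Your proof is correct and takes the same route as the paper, which does not write out an argument but delegates it to \cite[Proposition 7.2]{FM}: your key lemma --- that $\mathcal{A}_S^\varphi \circ \largewedge^n m_2$ equals $\Delta^\varphi(X_S,X_{L_2})\cdot\omega_2$ up to a unit, proved by recognizing the augmented square matrix as a presentation of $\mathrm{coker}(m_2)\simeq H_1^\varphi(X_S,X_{L_2};R)$ via the exact sequence of the triple --- combined with the $QR$-factorization $m_1=m_2\circ r^\varphi$ is precisely the mechanism of that reference. No gaps.
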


In particular, if the ribbon tube $(S,\varphi)$ is \emph{monotone} with respect to the coordinate in $I$ in $B^4 \times I$, it is called a \emph{ribbon braid} (see~\cite{ABMW, Dam}). In this case $\Delta^\varphi(X_T,X_{L_2})=1$ and $\rho(S,\varphi)$ coincide with the exterior powers of~$r^\varphi$.
The proof of Proposition \ref{burau} can be obtained similarly to~\cite[Proposition 7.2]{FM}, or in the monotone case, to~\cite[Section 3.1]{BCF}.

Let $(T,\varphi)$ be a $G$-colored ribbon tangle in~$B^4$. Let $L= L_0 \cup L_1$ be a splitting of $L$ into two disjoint (trivial) links, and let $B_0$ and $B_1$ be two $3$-balls such that 
\[
S^3= B_0 \cup_{S^2 \times \{ 0 \} } S^2 \times [0;1] \cup_{S^2 \times \{ 1 \} }  B_1
\]
 and $L_i \subset B_i$ for~$i=1,2$. 
 Let $\varphi_i$ be induced by $\varphi$ on $H_1(B_i \setminus L_i)$, and $\varepsilon_i$ be sequences of signs according to the co-orientations of the components of~$T$. 
 Then $(T,\varphi)$ is \emph{splitted} as a morphism $(\tilde{T},\varphi)\colon (\varepsilon_0,\varphi_0) \to (\varepsilon_1,\varphi_1)$. Note that~$2n=n_0+n_1$.

\begin{thm} \label{ind}
Let $(T,\varphi)$ be a $G$-colored ribbon tangle, and $(\tilde{T},\varphi)\colon (\varepsilon_0,\varphi_0) \to (\varepsilon_1,\varphi_1)$ be a splitting of $(\mathcal{T},\varphi)$ in~$\mathcal{R}ib_G$. 
There is an isomorphism, well-defined up to a unit in~$R$,
\[ \largewedge^n H_\partial \to  \mathrm{Hom}_R (\largewedge M_0,\largewedge M_1)\]
sending $\mathsf{A}(T,\varphi)$ to $ \oplus_k (-1)^{k(n_0-k)} \rho_k (\tilde{T},\varphi)$, where $\rho_k$ is the $k$-component of~$\rho$.
\end{thm}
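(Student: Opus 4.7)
The plan is to unwind both sides of the stated correspondence back to their common source — the Alexander function on the twisted homology of the tangle exterior — and then match them up via Poincar\'e duality on exterior algebras.

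First I would set up the identifications. The decomposition $S^3=B_0\cup_\partial(S^2\times I)\cup_\partial B_1$ splits $L$ as $L_0\sqcup L_1$ with $L_i\subset B_i$; a Mayer--Vietoris argument, noting that the cylinder $S^2\times I$ contributes no $1$-cycles and merely connects the two basepoints, yields $H_\partial\cong M_0\oplus M_1$ and hence $\largewedge^n H_\partial\cong\bigoplus_j\largewedge^j M_0\otimes\largewedge^{n-j}M_1$. In parallel, the homeomorphism $B^4\cong B^3\times I$ realizing the splitting identifies $X_T$ with $X_S$; since the arc $J\subset\partial B^3\times I$ deformation retracts to a point, $H_1^\varphi(X_S,J)\cong H_1^\varphi(X_T,\ast)$, and the Alexander functions $\mathcal{A}_T^\varphi$ and $\mathcal{A}_{\tilde T}^\varphi$ coincide up to a unit in~$R$. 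Next, fixing compatible volume forms $\omega_0$ on $M_0$, $\omega_1$ on $M_1$ and $\omega_\partial=\omega_0\wedge\omega_1$ (all defined up to units), Poincar\'e duality on exterior algebras provides, for each $k$, an isomorphism
\[ \largewedge^{n_0-k}M_0\otimes\largewedge^{k+\delta}M_1 \;\xrightarrow{\sim}\; \mathrm{Hom}_R\bigl(\largewedge^k M_0,\largewedge^{k+\delta}M_1\bigr),\qquad \alpha\otimes\beta\;\mapsto\;\bigl(x\mapsto\omega_0(\alpha\wedge x)\,\beta\bigr); \]
summing over $k$ produces the claimed isomorphism of $\largewedge^n H_\partial$ onto the degree-$\delta$ part of $\mathrm{Hom}_R(\largewedge M_0,\largewedge M_1)$. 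The ranks match by Vandermonde, $\sum_k\binom{n_0}{k}\binom{n_1}{k+\delta}=\binom{2n}{n}$.

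With this setup, the identification of $\mathsf{A}(T,\varphi)$ with $\oplus_k(-1)^{k(n_0-k)}\rho_k(\tilde T,\varphi)$ follows by placing the two defining equations side by side. Writing $\mathsf{A}(T,\varphi)=\sum_j A_j$ in the decomposition, fix decomposable $z=x\wedge y$ with $x\in\largewedge^k M_0$ and $y\in\largewedge^{n-k}M_1$; degree counting singles out $A_{n_0-k}=\sum_i\alpha_i\otimes\beta_i$. Definition~\ref{alex} and the definition of $\rho_k(\tilde T,\varphi)$ from Section~\ref{SS:Burau} yield, respectively,
\[ \omega_\partial\bigl(\mathsf{A}(T,\varphi)\wedge x\wedge y\bigr)=\mathcal{A}_T^\varphi(m_0 x\wedge m_1 y)\quad\text{and}\quad \mathcal{A}_{\tilde T}^\varphi(m_0 x\wedge m_1 y)=\omega_1\bigl(\rho_k(\tilde T,\varphi)(x)\wedge y\bigr). \]
Equating via the identification of Alexander functions, expanding $\omega_\partial=\omega_0\wedge\omega_1$ on the left and rearranging each $\alpha_i\wedge\beta_i\wedge x\wedge y$ into an $M_0$-part followed by an $M_1$-part generates a Koszul sign; after cancelling $\omega_1(\,\cdot\,\wedge y)$ for arbitrary $y$, one reads off that the image of $A_{n_0-k}$ under the Poincar\'e-duality identification is precisely $(-1)^{k(n_0-k)}\rho_k(\tilde T,\varphi)$.

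The main obstacle will be the sign bookkeeping. Several Koszul signs enter (from the Poincar\'e duality convention on $\largewedge M_0$, from the decomposition $\omega_\partial=\omega_0\wedge\omega_1$, and from moving $x$ past $\beta_i$ in the wedge), and pinning the total down to exactly $(-1)^{k(n_0-k)}$ requires fixing compatible conventions throughout. A subsidiary check is that the basepoint comparison $H_1^\varphi(X_S,J)\cong H_1^\varphi(X_T,\ast)$ only alters Alexander functions by a unit — an ambiguity harmlessly absorbed into the ``well-defined up to a unit in~$R$'' clause of the statement.
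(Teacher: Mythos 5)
Your proposal is correct and follows essentially the same route as the paper's proof: decompose $\largewedge^n H_\partial$ as $\bigoplus_k \largewedge^k M_0 \otimes \largewedge^{n-k} M_1$, use the volume-form duality $\alpha\otimes\beta\mapsto\bigl(x\mapsto\omega_0(\alpha\wedge x)\,\beta\bigr)$, and equate the defining identities of $\mathsf{A}$ and $\rho$ through the common Alexander function, with the Koszul sign from reordering producing $(-1)^{k(n_0-k)}$. Your explicit remarks on the identification $H_1^\varphi(X_S,J)\cong H_1^\varphi(X_T,\ast)$ and the rank count are sensible additions but do not change the argument.
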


An explicit example is given in Remark \ref{bur}.

\begin{proof}
The decomposition $H_\partial = M_0 \oplus M_1$ induces a natural isomorphism \[ \largewedge^n H_\partial \simeq \bigoplus_{k=0}^n \big( \largewedge^k M_0 \otimes \largewedge^{n-k} M_1 \big).\]
The element $\mathsf{A}(T,\varphi) \in \largewedge^n H_\partial$ decomposes as $\sum_k \mathsf{A}_k(T,\varphi)$, where $\mathsf{A}_k(T,\varphi) \in \largewedge^k M_0 \otimes \largewedge^{n-k} M_1$. Suppose now that $k$ is fixed; the element $\mathsf{A}_k(T,\varphi)$ might not be decomposable. There exist a finite sequence of element $A^l_0 \in  \largewedge^k M_0$ and $A^l_1 \in  \largewedge^{n-k} M_1$ (depending on $k$, and $(T,\varphi)$) such that 
$\mathsf{A}_k(T,\varphi) = \sum_l A_0^l \otimes A^l_1$. 
Let $\omega_0$ be a volume form  $\largewedge^{n_0} M_0 \stackrel{}  \to R$.
There is an isomorphism
\[  \largewedge^k M_0 \otimes \largewedge^{n-k} M_1 \simeq  \mathrm{Hom}_R(\largewedge^{n_0-k} M_0, \largewedge^{n-k} M_1)\] 
sending $\mathsf{A}_k(T,\varphi)$ to 
\[x \mapsto \sum_l \omega_0 \left(x \wedge A_0^l \right) \cdot A_1^l.\]
We now show that this morphism coincides with~$\rho_{n_0-k} (\tilde{T},\varphi)$. Let $x \in \largewedge^{n_0-k} M_0$. Note that we have $n_0-k + \delta= n-k$ and the morphism has degree~$\delta$. Consider  a volume form  $\omega_1\colon \largewedge^{n_1} M_1 \to R$ and the sum $\omega_\partial= \omega_0 \otimes \omega_1 \colon \largewedge^n H_\partial \to R$. Let $\mathcal{A}^\varphi$ be the Alexander function related to a presentation of $H_1^\varphi(X_T,\ast;R)$ of deficiency~$n$.
Since $m_\partial = m_0 \oplus m_1$ we have, for all $y $ in $\largewedge^{k+\delta}M_1$,
\begin{dmath*}
 \mathcal{A}^\varphi ( \largewedge^{n_0-k} m_0(x) \wedge \largewedge^{k+\delta} m_1(y) )  =
 \omega_\partial ( \mathsf{A}(T,\varphi) \wedge x \wedge y ) 
 = \omega_\partial ( \mathsf{A}_k(T,\varphi) \wedge x \wedge y ) 
 = \sum_l \omega_0 ( A_0^l  \wedge x ) \cdot \omega_1 (  A^l_1 \wedge y ) 
 = \omega_1 \big( \sum_l  \omega_0 (A_0^l   \wedge x ) \cdot A^l_1 \wedge y \big).
\end{dmath*}
Hence $\sum_l  \omega_0 (x \wedge A_0^l ) \cdot A^l_1= (-1)^{k(n_0-k)}\rho_{n_0 -k}(\tilde{T},\varphi)(x)$. Note that if, for cosmetic reasons, one prefers that $\mathsf{A}_k(T,\varphi)$ induces $(-1)^{k(n_0-k)}\rho_{k}(\tilde{T},\varphi)(x)$ instead, one might compose with Hodge duality $\largewedge^{k} M_0 \rightarrow \largewedge^{n_0- k} M_0$.
\endproof

\section{The circuit algebra of colored cobordisms}
\label{ribbon}

In this section, we introduce the circuit algebra~$\mathsf{Cob}_G$. The algebraic structure is inspired by~\cite{J, P},   see also~\cite{K, Arc}.

\begin{de}
Let $B=B_0$ be a $4$-ball and  $B_1,\dots,B_p$ be disjoint $4$-balls in the interior of~$B$.
For every $i \in \{0,\ldots,p\}$, let $L_i$ (with $L=L_0$) be a trivial oriented link with $2n_i$ ($n=n_0$) components in $S^3_i=\partial B_i$ (with $S^3=S^3_0$).
A \emph{cobordism} $C$ is a disjoint union of locally flat proper embedded annuli in $B \setminus \{  \mathring{B}_1,\dots \mathring{B}_p\}$, whose boundary are the links~$L_i$, with the conditions of Definition~\ref{ribtan} but without singularities. 
\end{de}

\begin{de}
Let $C'$ and $C''$ be two cobordisms such that $B'_i$ is a ball of $C'$ with~$n'_i=n''$. The  \emph{composition} $C' \circ_i C''$
 is the cobordism obtained with the identification of $B''=B_0''$ with~$B'_i$.
\end{de}

As in the previous section, $G$ is a fixed free abelian group with group ring~$R$.
 A $G$-\emph{colored} cobordism is a pair $(C,\varphi)$ 
 where $C$ is a cobordism with complement~$X_C$, equipped with is a group homomorphism $\varphi\colon H_1(B^4 \setminus C) \to G$. 
The orientation-preserving diffeomorphism classes of $G$-colored cobordisms with composition of compatible cobordisms, form an operad denoted~$\mathcal{C}_G$.

Let $\mathcal{H}om_G$ be the operad of tensor powers of $R$-modules and $R$-linear applications, considered up to an element of~$\pm G$. The composition in $\mathcal{H}om_G$ is induced by the usual composition of maps.
We construct the circuit algebra $\mathsf{Cob}_G$ as a morphism of operads from  $\mathcal{C}_G$ to~$\mathcal{H}om_G$.

Let $(L,\varphi)$ be a $G$-colored oriented trivial link with $k$ components in~$S^3$, with complement $X_L=S^3 \setminus L$. The group homomorphism $\varphi \colon H_1(S^3 \setminus L) \to G$ induces a ring homomorphism denoted also $\varphi\colon \mathbb{Z}[H_1(S^3 \setminus L)] \to R$.
Let $\ast$ be a base point on~$S^3$.    
 The $R$-module $H_1^\varphi(S^3 \setminus L, \ast; R)$ is free of rank~$k$, generated
 by the meridians of~$L$.

Let now $(C,\varphi)$ be a $G$-colored cobordism, with complement~$X_C$.
For $i=1,\dots,p$, let $\ast$ and $\ast_i$ be base points in the boundary of $B_i$ and $J_i$ be intervals (whose interiors are disjoint, and disjoint from $C$) connecting $\ast$ to~$\ast_i$. 
Note that the union of the $J_i$ is contractible. The homomorphism $\varphi$ induces a ring homomorphism $\mathbb{Z}[H_1(X_C)] \to R$ also denoted~$\varphi$. The inclusion $m_i\colon S^3_i \setminus L_i \hookrightarrow X_T$ induces $\varphi_i \colon \mathbb{Z}[H_1(S^3_i \setminus L_i)] \to R$. Set
 $H= H_1^\varphi(X_C,J)$, $H_\partial=H_1^\varphi(S^3\setminus L, \ast)$, and $H_{\partial_i}= H_1^{\varphi_i}(S^3 \setminus L_i, \ast_i)$ for~$i=1,\dots,p$. Note that $H$ is free of rank $ r=n+n_1+\cdots+n_p$ and $H_{\partial_i}$ are free of rank~$n_i$.
Let $\omega_C^\varphi$ be a volume form $\omega_C^\varphi\colon \largewedge^{r} H \to R$,
 and $\omega_\partial \colon \largewedge^{2n} H_\partial \to R$. For $i=1,\dots,p$, we denote again $m_i \colon H_{\partial_i} \to H$ the map induced by the inclusion. Let $\displaystyle{m \colon \otimes_{i} \largewedge^{n_i} H_{\partial_i} \to
 \largewedge ^{n_1+\cdots+n_p} H}$ be defined as $\displaystyle m= \largewedge^{n_1} m_1 \wedge \cdots \wedge \largewedge^{n_p} m_p$. 

 To the cobordism $(C,\varphi)$ we associate \[ \Upsilon_{C,\varphi}\colon \bigotimes_{i=1}^p \largewedge^{n_i} H_{\partial_i}
 \to \largewedge^{n} H_\partial\] such that, for $x \in \otimes_{i} \big( \largewedge^{n_i} H_{\partial_i} \big)$,
\begin{equation} \label{defcob}
 \omega_C^\varphi ( m(x) \wedge m_\partial(y) ) = \omega_\partial ( \Upsilon_{C,\varphi}(x) \wedge y), \ \forall y \in \largewedge^n H_\partial.
\end{equation}

We denote $\mathsf{Cob}_G$ the object that associates to a pair $(L,\varphi)$ the homology module~$H_1^\varphi(S^3 \setminus L, \ast; R)$, and to each pair $(C,\varphi)$ the linear map~$\Upsilon_{C,\varphi}$.


\begin{thm} \label{cob}
$\mathsf{Cob}_G$ is a circuit algebra.
\end{thm}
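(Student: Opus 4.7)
The plan is to verify three things: well-definedness of $\Upsilon_{C,\varphi}$ modulo the operadic ambiguity $\pm G$, compatibility with the operadic composition $\circ_i$, and the unit axiom that identity cobordisms act as identity maps. The first and third are short verifications; compatibility with composition is the core of the statement and follows the same pattern as Theorem~\ref{ind} and~\cite{FM}, with a Mayer--Vietoris gluing replacing the boundary splitting.

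For well-definedness, I would first establish that $H = H_1^\varphi(X_C,J)$ is a free $R$-module of rank $r = n + n_1 + \cdots + n_p$. Since a cobordism has no ribbon singularities, $X_C$ deformation retracts onto a $2$-complex with only Wirtinger-type generators and relations, so the argument behind Proposition~\ref{pres} applies and in fact yields a free presentation of the asserted rank. Consequently $\omega_C^\varphi$ is unique up to a unit of $R$, and once it is normalised compatibly with the meridional volume forms $\omega_\partial$ and $\omega_{\partial_i}$ (all chosen relative to fixed orderings of the components of the $L_i$), the residual indeterminacy reduces to a sign times an element of $G$, as required in $\mathcal{H}om_G$.

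For compatibility with $\circ_i$, fix $C = C' \circ_i C''$ obtained by identifying $B''$ with~$B'_i$. Arrange the arc systems so that the one for $C$ passes through the basepoint $\ast_i$, and denote by $J'$, $J''$ the induced arc systems for $C'$ and $C''$; then $X_C = X_{C'} \cup_{X_{L_i}} X_{C''}$ with intersection $X_{L_i} = S^3_i \setminus L_i$. The twisted Mayer--Vietoris sequence
\[
\cdots \to H_*^{\varphi_i}(X_{L_i}, \ast_i) \to H_*^{\varphi}(X_{C'}, J') \oplus H_*^{\varphi}(X_{C''}, J'') \to H_*^\varphi(X_C, J) \to \cdots
\]
identifies the determinant line of $H$ with that of $H' \oplus H''$ modulo $\largewedge^{n_i} H_{\partial_i}$, which translates into a relation of volume forms
\[
\omega_C^\varphi \;\doteq\; \omega_{C'}^{\varphi} \wedge_{H_{\partial_i}} \omega_{C''}^{\varphi}
\]
valid up to a sign in $\pm G$. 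Substituting this identity into the defining equation~(\ref{defcob}) for $\Upsilon_{C,\varphi}$ and expanding the contraction against a basis of $\largewedge^{n_i} H_{\partial_i}$ dual to itself via $\omega_{\partial_i}$ reproduces exactly $\Upsilon_{C',\varphi}\circ_i \Upsilon_{C'',\varphi}$; this is formally the same manipulation that proves Theorem~\ref{ind} once the boundary there is split as $M_0 \oplus M_1$. The unit axiom is immediate: for a trivial cylinder the inclusion $H_\partial \hookrightarrow H$ is an isomorphism and~(\ref{defcob}) forces $\Upsilon = \mathrm{id}$.

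The main obstacle is controlling signs and units in the gluing formula for volume forms. The exterior-algebra identifications introduce shuffle signs analogous to the $(-1)^{k(n_0-k)}$ factor of Theorem~\ref{ind}, arising when the $n_i$ meridians of $L_i$ are moved past those of the remaining inner boundaries. I would show that these signs are absorbed into the allowed indeterminacy $\pm G$ and do not accumulate in a way that would obstruct strict operadicity. This bookkeeping is already handled in~\cite{FM} and requires no new homological input beyond the Mayer--Vietoris identification of determinant lines above.
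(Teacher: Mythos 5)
Your proposal is correct and follows essentially the same route as the paper: a Mayer--Vietoris gluing of the (free) modules $H'$ and $H''$ along $H_{\partial_i}$, followed by an expansion of the gluing relations against an $\omega_{\partial_i}$-dual basis of $\largewedge^{n_i}H_{\partial_i}$ (the paper's sum over subsets $Q\subset\{1,\dots,2n''\}$ of cardinality $n''$), with signs absorbed into the $\pm G$ indeterminacy. The only differences are cosmetic --- you phrase the volume-form identity via determinant lines where the paper works with an explicit presentation of $H$, and you add the (easy) well-definedness and unit checks that the paper leaves implicit.
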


\begin{proof}
We show that $\Upsilon$ commutes with the composition of cobordisms.
Let $(C',\varphi')$ and $(C'',\varphi'')$ be two cobordisms with 
\[
\Upsilon_{C',\varphi'}\colon \bigotimes_{k=1}^{p'} \largewedge^{k'_i} H_{\partial'_k}
\to \largewedge^{n'} H_\partial', \quad \mbox{and} \quad \Upsilon_{C'',\varphi''}\colon \bigotimes_{l=1}^{p''} \largewedge^{n''_l} H_{\partial''_l}
\to \largewedge^{n''} H_{\partial''}.
\]

Let $\varphi$ be the coloring induced by $\varphi'$ and $\varphi''$ on~$C' \circ_i C''$.
Then, we have to prove that for all $u_k  \in \largewedge^k_iH_{\partial'_k}$ with $k=1,\dots,p'$ and $k \neq i$ and all $v_l$ for~$l=1,\dots,p''$,
\[ \Upsilon_{C' \circ_i C'',\varphi} \big(u_1 \otimes \cdots \otimes (\otimes_{1}^{p''} v_l) \otimes \cdots \otimes u_{p'} \big)
 = \Upsilon_{C',\varphi'} \big( u_1 \otimes \cdots \otimes \Upsilon_{C'',\varphi''}(\otimes_{1}^{p''} v_l) \otimes \cdots \otimes u_{p'} \big).\]

Let $H'=H_1^{\varphi'}(X_{T'},J')$ and $H''=H_1^{\varphi''}(X_{T''},J'')$ be the (free) homology modules of the exteriors of the cobordisms. Let $\alpha_1,\dots,\alpha_{2n''}$ be a basis of $H_{\partial''} \simeq H_{\partial'_i}$. Consider presentations of $H'$ and~$H''$:
\begin{gather*} 
H'' = \langle m_{\partial''}\alpha_1, \dots, m_{\partial''}\alpha_{2n''}, \beta_1,\dots,\beta_k  \, \vert\, \rho_1,\dots,\rho_s \rangle, \\
H' = \langle m_i' \alpha_1, \dots, m_i'\alpha_{2n''}, \zeta_1,\dots,\zeta_l  \, \vert\, r_1,\dots,r_t \rangle.
\end{gather*}
Applying Mayer-Vietoris theorem to $X_{T}= X_{T'} \cup X_{T''}$, we obtain that the (free) module $H$ is generated by 
\begin{equation}\label{gen}
 m_{\partial''}\alpha_1, \dots, m_{\partial''}\alpha_{2n''},  m_i' \alpha_1, \dots, m_i'\alpha_{2n''}, \beta_1,\dots,\beta_k, \zeta_1,\dots,\zeta_l 
\end{equation}
subject to the relations $\rho_1,\ldots,\rho_s, r_1,\ldots,r_t, m_{\partial''}\alpha_1- m_i' \alpha_1, \ldots, m_{\partial''}\alpha_{2n''}-m_i'\alpha_{2n''}$.
Let $\omega'$ and $\omega''$ be volume forms on $H'$ and~$H''$, and $\omega_{\partial'}$ be the form on~$H_{\partial'}$. Let $\omega$ be the form on $H$ induced by $\omega'$ and~$\omega''$.
 For the computation below, we introduce the notation 
\begin{equation*}
u \wedge_i v = u_1 \wedge \cdots \wedge u_{i-1} \wedge (v_1 \wedge \cdots \wedge v_{p''}) \wedge
 u_{i+1} \wedge \cdots \wedge u_{p'}.
\end{equation*}
We want to show that, for all $y \in  \largewedge^{n'} H_{\partial'}$,
\begin{dmath*}  \omega_{\partial'} \big( \Upsilon_{C' \circ C''}  (u_1 \otimes \cdots \otimes (\otimes_{1}^{p''} v_l) \otimes \cdots \otimes u_{p'} ) \wedge y \big)
 = \omega_{\partial'} \big( \Upsilon_{C'} ( u_1 \otimes \cdots \otimes \Upsilon_{C''}(\otimes_{1}^{p''} v_l) \otimes \cdots \otimes u_{p'} ) \wedge y \big).
\end{dmath*}

We have
\begin{dmath*}
 {\omega_{\partial'} \big( \Upsilon_{C' \circ C''}  (u_1 \otimes \cdots  \otimes  (\otimes_{1}^{p''} v_l) \otimes \cdots \otimes u_{p'} ) \wedge y )\cdot m_{\partial''} \alpha \wedge \beta \wedge m'_i \alpha \wedge \zeta} 
  = {  \omega( (m'u \wedge_i m''v) \wedge m_{\partial'} y) \cdot m_{\partial''} \alpha \wedge \beta \wedge m'_i \alpha \wedge \zeta}  
  = { \rho \wedge r \wedge (m_{\partial''}\alpha- m_i' \alpha) \wedge (m'  u \wedge_i  m''v ) \wedge m_{\partial'} y}
   = {  \sum_Q (-1)^{|Q|} \epsilon_Q \cdot \rho \wedge r \wedge m_{\partial''} \alpha_Q \wedge m'_i \alpha_{\bar{Q}} \wedge (m' u \wedge_i m''v) \wedge m_{\partial'} y,}
\end{dmath*}
where the sum is taken over all subsets $Q \subset \{1,\dots,2 n'' \}$ of cardinal $n''$. The number $t$ of relations $r_i$ can be chosen arbitrarily to be even and $m_{\partial''} \alpha_Q \wedge m'_i \alpha_{\bar{Q}} \wedge (m' u \wedge_i m''v)$ coincides with $m_{\partial''}\alpha_Q \wedge m''v  \wedge (m' u \wedge_i m'_i \alpha_{\bar{Q}} )$ up to a sign depending only on $p''$ and the $n_i'$, the sum coincides up to a sign with
\begin{equation*}
 \sum_Q (-1)^{|Q|} \epsilon_Q  \cdot \big( \rho \wedge m_{\partial''}\alpha_Q \wedge m''v ) \wedge ( r \wedge (m' u \wedge_i m'_i \alpha_{\bar{Q}} ) \wedge m_{\partial'} y)
\end{equation*}
whose summands are equal to
\begin{equation*}
(-1)^{|Q|} \epsilon_Q \cdot \omega''(m_{\partial''}\alpha_Q \wedge m''v  ) \cdot \omega' ((m' u \wedge_i  m_i' \alpha_{\bar{Q}})   \wedge m_{\partial'} y) \cdot (m_{\partial''} \alpha \wedge \beta \wedge m_i' \alpha \wedge \zeta).
\end{equation*}
It follows that, up to a sign, 
\begin{dmath*}
 {\omega_{\partial'} \big( \Upsilon_{C' \circ C''}  (u_1 \otimes \cdots \otimes (\otimes_{1}^{p''} v_l) \otimes \cdots \otimes u_{p'} ) \wedge y \big) }
 = { \sum_Q (-1)^{|Q|} \epsilon_Q \cdot \omega''(m_{\partial''} \alpha_Q \wedge m'' v ) \cdot \omega' \big((m'u \wedge_i  m_i' \alpha_{\bar{Q}} ) \wedge m_{\partial'} y \big) }
= {  \omega' \big( \sum_Q (-1)^{|Q|} \epsilon_Q  \cdot \omega''(m_{\partial''} \alpha_Q \wedge m'' v ) \cdot (m'u \wedge_i m_i' \alpha_{\bar{Q}})  \wedge m_{\partial'} y \big) }
= { \omega' \big( \sum_Q (-1)^{|Q|} \epsilon_Q \cdot \omega_{\partial''}(\Upsilon_{C''}(v_1 \otimes \cdots \otimes v_{p''} )  \wedge \alpha_Q) \cdot (m'u \wedge_i  m_i' \alpha_{\bar{Q}} ) \wedge m_{\partial'} y \big) }
= {  \omega' \big( \big( m' u \wedge_i \big[ \sum_Q (-1)^{|Q|} \epsilon_Q \cdot  \omega_{\partial''}(\Upsilon_{C''}(v_1 \otimes \cdots \otimes v_{p''} )  \wedge \alpha_Q)  \big] m'_i \alpha_{\bar{Q}} \big) \wedge  m_{\partial'} y \big) }
= {  \omega'  \big(  \big(m' u \wedge_i  m_i' (\Upsilon_{C''}(v_1 \otimes \cdots \otimes v_{p''} ))\big) \wedge m_\partial' y \big) }
= { \omega_{\partial'} \big( \Upsilon_{C'} ( u_1 \otimes \cdots \otimes \Upsilon_{C''}(\otimes_{1}^{p''} v_l) \otimes \cdots \otimes u_{p'} ) \wedge y \big).}
\end{dmath*}

\end{proof}

\subsection{Action of cobordisms on tangles}

Given a cobordism $C$ and a collection of tangles $T_1,\cdots,T_p$, one may create a new tangle, if $n(T_i)=n_i$ for all $i=1,\dots,p$, by gluing each $T_i$ into the internal ball $B_i$ of~$C$.
The action of $G$-colored cobordisms on $G$-colored tangles is defined once the colorings coincide on the boundary components.
The following theorem states that the invariant $\mathsf{A}$ respects the structure of circuit algebra~$\mathsf{Cob}_G$.

\begin{thm} \label{morph}
 Let $(T,\psi)$ be the $G$-colored tangle obtained by gluing the $G$-colored tangles $(T_1, \varphi_1), \cdots, (T_p,\varphi_p)$ to a $G$-colored cobordism~$(C,\varphi)$.
The following equality holds
\[ \mathsf{A}(T,\psi) = \mathsf{\Upsilon}_{C,\varphi} \big( \mathsf{A}(T_1,\varphi_1) \otimes \cdots \otimes \mathsf{A}(T_p,\varphi_p) \big) \in \largewedge^n H_\partial.\]
\end{thm}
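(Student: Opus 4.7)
The plan is to verify the defining identity for $\mathsf{A}(T,\psi)$. By Definition~\ref{alex} it suffices to show that, for every $z \in \largewedge^n H_\partial$,
\[
 \mathcal{A}_T^\psi(m_\partial z) \;=\; \omega_\partial\bigl(\Upsilon_{C,\varphi}(\mathsf{A}(T_1,\varphi_1) \otimes \cdots \otimes \mathsf{A}(T_p,\varphi_p)) \wedge z\bigr),
\]
and by the defining relation \eqref{defcob} of $\Upsilon_{C,\varphi}$ the right-hand side equals $\mathcal{A}_C^\varphi\bigl(m(\mathsf{A}(T_1,\varphi_1) \otimes \cdots \otimes \mathsf{A}(T_p,\varphi_p)) \wedge m_\partial z\bigr)$. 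The task therefore reduces to expressing $\mathcal{A}_T^\psi$ in terms of $\mathcal{A}_C^\varphi$ and the individual $\mathcal{A}_{T_i}^{\varphi_i}$, which is a natural extension of the argument used for Theorem~\ref{cob}.

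First I would apply Mayer--Vietoris to the decomposition $X_T = X_C \cup X_{T_1} \cup \cdots \cup X_{T_p}$, where the pieces meet along the boundary link exteriors $S^3_i \setminus L_i$. Starting from deficiency-$n_i$ presentations of $H_1^{\varphi_i}(X_{T_i},\ast_i)$ provided by Proposition~\ref{pres} and from a deficiency-$r$ presentation of $H_1^\varphi(X_C,J)$, with generators chosen to include the meridians of each $L_i$ as boundary generators on both sides, one assembles a presentation of $H_1^\psi(X_T,\ast)$ whose only new relators are the identifications $m_{\partial_i}\alpha^i_j - m^C_i\alpha^i_j$ pairing the two copies of these meridians. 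A direct deficiency count shows that this presentation has deficiency $n$, matching Proposition~\ref{pres}, and is thus eligible for the Alexander-function construction.

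With this combined presentation in hand, $\mathcal{A}_T^\psi(m_\partial z)$ is computed as the coefficient of the total wedge of generators when the wedge of all relators is multiplied by lifts of $m_\partial z$. For each $i$, expanding the wedge over the identification relators $m_{\partial_i}\alpha^i_j - m^C_i\alpha^i_j$ splits the expression into a sum indexed by subsets $Q_i$ of the meridians of $L_i$, in the same fashion as in the proof of Theorem~\ref{cob}, only with $p$ such sums now running in parallel. Each summand then factors, up to a controlled sign, into a tangle part --- a wedge of relators of $H_{T_i}$ against $m_{\partial_i}\alpha^i_{Q_i}$, which by the defining property of $\mathsf{A}(T_i,\varphi_i)$ contributes $\omega_{\partial_i}(\mathsf{A}(T_i,\varphi_i) \wedge \alpha^i_{\bar Q_i})$ --- and a cobordism part --- a wedge of relators of $H_C$ against $m^C_i\alpha^i_{\bar Q_i}$ together with lifts of $m_\partial z$, which is a value of $\mathcal{A}_C^\varphi$. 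Collecting the $p$-fold sums reproduces the expansion of $\mathcal{A}_C^\varphi\bigl(m(\mathsf{A}(T_1,\varphi_1) \otimes \cdots \otimes \mathsf{A}(T_p,\varphi_p)) \wedge m_\partial z\bigr)$, closing the argument.

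The main obstacle, as already in Theorem~\ref{cob}, will be the bookkeeping of signs and ordering when separating the $p+1$ contributions inside the wedge. Choosing the number of relators of each presentation to be even (after stabilising by trivial generator--relator pairs if necessary) and fixing compatible orderings of the meridians $\alpha^i_j$ on both sides of the Mayer--Vietoris identification should reduce the sign analysis to the one already carried out for Theorem~\ref{cob}, applied iteratively for each internal boundary $S^3_i$.
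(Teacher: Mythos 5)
Your plan is correct and follows essentially the same route as the paper's proof: a Mayer--Vietoris assembly of the presentation of $H_1^\psi(X_T,\ast)$ with identification relators, expansion of the wedge of relators over subsets $Q_i$ of the meridians of each $L_i$, factorisation of each summand into a tangle part (a value of $\mathcal{A}_{T_i}^{\varphi_i}$, hence of $\omega_{\partial_i}(\mathsf{A}(T_i,\varphi_i)\wedge -)$) and a cobordism part (a value of $\omega_C^\varphi$), with signs controlled by taking the numbers of relators even. The only slip is notational: wedging the relators of $H_{T_i}$ against $m_{\partial_i}\alpha^i_{Q_i}$ yields $\omega_{\partial_i}(\mathsf{A}(T_i,\varphi_i)\wedge \alpha^i_{Q_i})$, paired with $m^C_i\alpha^i_{\bar Q_i}$ in the cobordism factor, rather than the $\bar Q_i$ you wrote.
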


\begin{proof}
For $i=1,\cdots,p$, consider a presentation of $H_{T_i}= H_1^{\varphi_i} (X_{T_i}, \ast_i)$ of the form
\begin{equation*}
H_{T_i}  = \langle m_{\partial_i} \gamma_1^i, \dots, m_{\partial_i} \gamma_{2n_i}^i, \beta_1^i, \dots, \beta_{k_i}^i  \mid \rho_1^i, \dots, \rho_{s_i}^i \rangle
\end{equation*}
and let $\mathcal{A}_{T_i}^{\varphi_i}$ be the Alexander function related to this presentation.
Let $H_C=H_1^\varphi(X_C, J)$ be the free module, with volume form $\omega_C^\varphi$ associated to the presentation 
\begin{equation*}
H_{C}  = \langle m_{1} \gamma_1^1, \dots, m_{1} \gamma_{2n_1}^1, \dots, m_p \gamma_1^p,\dots,m_p \gamma_{2n_p}^p, \alpha_1,\dots,\alpha_l   \mid r_1, \dots, r_t \rangle.
\end{equation*}
By successive Mayer-Vietoris arguments, the module $H_{T}= H_1^{\varphi}(X_T,\ast)$ admits a presentation with generators of the form $m_{\partial_i} \gamma_j^i$, $m_{i} \gamma_j^i$, for $j=1,\dots,2n_i$ and $i=1,\dots,p$,
 and  $\beta_1^i, \dots, \beta_{k_i}^i, \alpha_1,\dots,\alpha_l$. They are subject to 
the relations $m_{\partial_i} \gamma_j^i - m_{i} \gamma_j^i$, for $j=1,\dots,2n_i$ and $i=1,\dots,p$, and
 $\rho_1^i, \dots, \rho_{s_i}^i$ for $i=1,\dots,p$ and $r_1,\dots,r_t$.
One has $\mathsf{A}(T,\psi) = \Upsilon_{C,\varphi} \big( \mathsf{A}(T_1,\varphi_1) \otimes \cdots \otimes \mathsf{A}(T_p,\varphi_p) \big)$ if and only if
\begin{equation*}  
\forall z \in \largewedge^{n} H_\partial, \   \mathsf{A}(T,\psi) \wedge z = \Upsilon_{C,\varphi} \big( \mathsf{A}(T_1,\varphi_1) \otimes \cdots \otimes \mathsf{A}(T_p,\varphi_p) \big) \wedge z   
\end{equation*}  
By Equation~\eqref{deftangle},  $\omega_\partial(\mathsf{A}(T,\psi) \wedge z)= \mathcal{A}_T^\psi \big(m_\partial  z \big)$. 
For short, we write $\mathsf{A}(T_i)$ for $\mathsf{A}(T_i,\varphi_i)$, for~$i=1,\dots,p$.
By definition of the Alexander function~$\mathcal{A}_T^\psi$,
\begin{dmath} \label{etoile}
{\mathcal{A}_T^\psi   \big( m_\partial   z \big) \cdot m_{\partial_\ast} \gamma \wedge m_\ast \gamma \wedge \beta \wedge \alpha
= \rho \wedge r \wedge (m_{\partial_i}^i - m_i \gamma^i ) \wedge m_\partial  z  }
 =  {\sum_{Q_1,...,Q_p} (-1)^{|Q|} \epsilon_Q \cdot \rho \wedge r \wedge m_{\partial_1} \gamma_{Q_1}^1 \wedge \cdots \wedge m_{\partial_{p}} \gamma_{Q_p}^p \wedge m_{1} \gamma_{\bar{Q}_1}^1 \wedge \cdots \wedge m_{p} \gamma_{\bar{Q}_p}^p  \wedge m_\partial z,}
\end{dmath}
where the sum is taken over the subsets $Q_i \subset \{1,\dots,2 n_i \}$ of cardinal $n_i$ (the other terms vanish).
We denote $\bar{Q}_i$ the complement of $Q_i$, $|Q|=n_1+\cdots+n_p$ and $\epsilon_Q$ the signature of the permutation $Q_1 \cdots Q_p \bar{Q}_1 \cdots \bar{Q}_p$, where the elements of $Q_1 \cdots Q_p$ in increasing order are followed by the elements of $\bar{Q}_1 \cdots \bar{Q}_p$ in increasing order. Moreover, we can decide arbitrarily that the number $t$ of relations in the presentation of $H_C$ is even, and get \eqref{etoile} to be equal to:
\begin{dmath*}
 {\quad \sum_{Q_1,\dots,Q_p}  (-1)^{|Q|} \epsilon_Q \cdot (\rho \wedge m_{\partial_{1}} \gamma_{Q_1}^1 \wedge \cdots \wedge m_{\partial_{p}} \gamma_{Q_p}^p) \wedge (r \wedge m_{1} \gamma_{\bar{Q}_1}^1 \wedge \cdots \wedge m_{p}   \gamma_{\bar{Q}_p}^p  \wedge m_\partial  z) } \\
 =  \sum (-1)^{|Q|}  \epsilon_Q \cdot (\rho \wedge m_{\partial_{1}} \gamma_{Q_1}^1 \wedge \cdots \wedge m_{\partial_{p}} \gamma_{Q_p}^p) \cdot \omega_C^\varphi( m_{1} \gamma_{\bar{Q}_1}^1 \wedge \cdots \wedge m_{p}  \gamma_{\bar{Q}_p}^p  \wedge  m_\partial  z) \cdot (m_\ast \gamma \wedge \alpha). 
\end{dmath*}  
Moreover, since $s_1,\dots,s_p$ can also be supposed even, 
\begin{dmath*}
\rho \wedge m_{\partial_{1}} \gamma_{Q_1}^1 \wedge \cdots \wedge m_{\partial_{p}} \gamma_{Q_p}^p =  (\rho^1 \wedge m_{\partial_{1}} \gamma_{Q_1}^1 )\wedge \cdots \wedge  (\rho^p \wedge m_{\partial_{p}} \gamma_{Q_p}^p) \\
= \mathcal_{A}_{T_1}^{\varphi_1} (m_{\partial_1} \gamma_{Q_1}^1) \cdots   \mathcal_{A}_{T_p}^{\varphi_p} (m_{\partial_p} \gamma^p_{Q_p})
       \cdot m_{\partial_\ast} \gamma \wedge \beta \\
  = \omega_{\partial_1} (\mathsf{A}(T_1) \wedge  \gamma^1_{Q_1} )  \cdots  \omega_{\partial_p} (\mathsf{A}(T_p) \wedge \gamma^p_{Q_p} ) \cdot  m_{\partial_\ast} \gamma \wedge \beta \\
\end{dmath*}

Then $\mathcal{A}_T^\psi \big(m_\partial z \big) $ coincides with the sum over $Q_1 \cdots Q_p$ of the summands
\begin{dmath*}
 { (-1)^{|Q|} \epsilon_Q \cdot  \omega_{\partial_1} (\mathsf{A}(T_1) \wedge  \gamma^1_{Q_1} )  \cdots  \omega_{\partial_p} (\mathsf{A}(T_p) \wedge \gamma^p_{Q_p} ) \cdot \omega_C^\varphi( m_{\bar{Q}_1} \gamma_{\bar{Q}_1}^1 \wedge \cdots \wedge m_{\bar{Q}_p} \gamma_{\bar{Q}_p}^p  \wedge m_\partial  z) } \\
= \omega_C^\varphi \big(    \sum (-1)^{|Q|} \epsilon_Q \cdot  \omega_{\partial_1} (\mathsf{A}(T_1) \wedge  \gamma^1_{Q_1} )  \cdots  \omega_{\partial_p} (\mathsf{A}(T_p) \wedge \gamma^p_{Q_p} )\cdot m_{\bar{Q}_1} \gamma_{\bar{Q}_1}^1 \wedge \cdots \wedge m_{\bar{Q}_p} \gamma_{\bar{Q}_p}^p  \wedge m_\partial  z \big).
\end{dmath*}
Note that for all $i=1,\dots,p$ and~$x \in \largewedge^{n_i} H_{\partial_i}$, we have
\begin{dmath*}
 {x = \sum_{|Q_i|=n_i} \epsilon_{\bar{Q}} \omega_{\partial_i} (x \wedge \gamma^i_{Q_i}) \cdot \gamma^i_{\bar{Q}_i} }= { \sum_{|Q_i|=n_i}  (-1)^{|Q_i|}  \epsilon_{Q_i} \omega_{\partial_i} (x \wedge \gamma^i_{Q_i}) \cdot \gamma^i_{\bar{Q}_i}.}
\end{dmath*}
Moreover, $(-1)^{|Q|}= (-1)^{|Q_1|} \cdots (-1)^{|Q_p|}$ and $\epsilon_Q \epsilon_{Q_1} \cdots \epsilon_{Q_p}$ do not depend on $Q_1,\dots,Q_p$ but only on $n_1=|Q_1|,\dots,n_p=|Q_p|$.
Hence, up to a sign
\begin{dmath*}
\mathcal{A}_T^\psi \big(m_\partial   z \big)   = \omega_C^\varphi(m_1 \mathsf{A}(T_1) \wedge \cdots \wedge m_p \mathsf{A}(T_p) \wedge m_\partial z) 
 =\omega_C^\varphi \big( m( \mathsf{A}_C (\Upsilon(T_1) \otimes \cdots \otimes \mathsf{A}(T_p)) \wedge m_\partial z) 
= \omega_\partial \big( \Upsilon_C (\mathsf{A}(T_1) \otimes \cdots \otimes \mathsf{A}(T_p)) \wedge z).
\end{dmath*}

\end{proof}

 \section{A diagrammatic description}
\label{diagram}

\emph{Broken surfaces} are locally flat immersions in the $3$-ball $B^3$ of disjoint annuli and tori, whose singularities consist of a finite number of circles. Any ribbon tangle in $B^4$ can be projected onto a broken surface in a suitable sense. These projections can be viewed as a way to represent ribbon tangles similarly to diagrams in usual knot theory. Conversely, every broken surface is the projection of a ribbon tangle. This gives a  correspondence between ribbon tangles and broken surfaces (in fact, of a certain type, called \emph{symmetric}). 

\emph{Welded} diagrams are a quotient of virtual diagrams under a certain set of moves, see for example \cite{Au}.
To each welded tangle one may associate a symmetric  broken surface diagram, see \cite{S} and~\cite{YAJ2}. This define a map $Tube$ sending  any welded tangle to the ribbon tangle associated to the symmetric broken surface resulting from the preceding construction.

For general ribbon knotted objects, such as ribbon tangles, ribbon tubes~\cite{ABMW} and knotted spheres~\cite{YAN}, the Tube map is well defined and surjective, but its injectivity is still an open question~\cite{WKO1}. However, this is an isomorphisms on ribbon braids and extended ribbon braids~\cite{BH, Dam}.

A \emph{combinatorial} fundamental group can be defined for welded tangle diagrams, with the Wirtinger method (virtual crossing are simply ignored). 
This group coincides with the fundamental group of the complement of an associated ribbon tangle in~$B^4$~\cite{S,Yaj, ABMW}. Then, two welded tangle representing the same  ribbon tangle have isomorphic fundamental groups (and this isomorphism sends meridian to meridian).

Let $D$ be the unit disk in $\mathbb{C}$, and for any positive integer $n$, let $x_1,\dots,x_{2n}$ be a fixed ordered set of points in $\partial D$.
\begin{de}
Let $n$ be a positive integer.
A \emph{welded tangle} on $n$-strands, or \emph{welded $n$-tangle} is a proper embedding $\tau$ of an oriented $1$-manifold  
 in $D$. It consists of some copies of the circle and $n$ copies of the unit interval whose boundary are $\{x_1,\dots,x_{2n} \}$.  The singular set of $\tau$ is a finite number
of transversal double points  equipped with a partial order on the preimages.
By convention, this order is specified by erasing a small neighbourhood of the lowest preimage, or by
pointing the crossing if the preimages are not comparable.
If the preimages of a crossing are comparable, then the crossing is said \emph{classical}; otherwise it is said virtual.
Moreover, a classical crossing is \emph{positive} if the basis made of the tangent vectors of the highest and
lowest preimages is positive; otherwise, it is \emph{negative}.
\end{de}

\begin{figure}[htb] 
	\centering
		\includegraphics[scale=0.5]{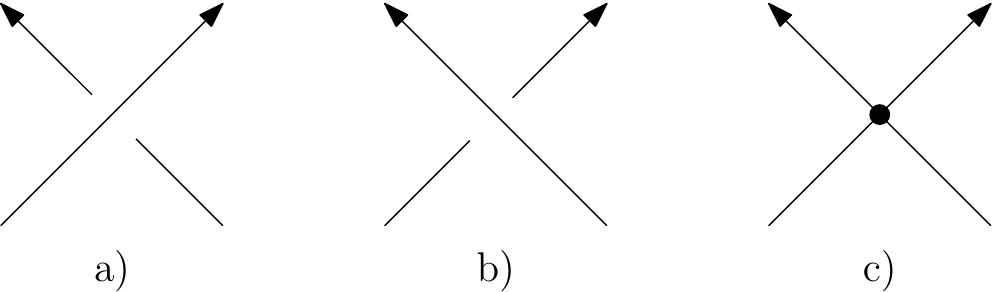} 
	\caption{Positive, negative and virtual crossings.}
	\label{F:Crossings}
\end{figure}

Let $\mu$ be a positive integer.
A $\mu$-\emph{colored} welded tangle is a pair $(\tau,\psi)$ where $\tau$ is a welded tangle and $\psi$ is a map from the set of strands and circles to the set~$\{ t_1,\dots, t_\mu \}$.  
Two welded colored tangles are equivalent if they are related by \emph{generalized Reidemeister moves} (see~\cite[pp. 445 and 454]{KAM}), respecting the coloring.

\subsection{Computation of \texorpdfstring{$\mathsf{A}$}{}}
\label{SS:AlexMatrix}

Let $(\tau,\psi)$ be a $\mu$-colored welded tangle. It decomposes into a finite union of disjoint oriented arcs. Label the crossings with (formal) letters, and each arc with the same letter as the crossing it begins at. If an arc connects points on the border of $\tau$ without meeting any crossing, we use the convention of Figure~\ref{F:matrix2}.
We construct a matrix $M^\psi(\tau)$ with coefficients in $G$ where the rows are indexed by crossings (positive, negative and virtual) and points interrupting arcs, and the columns by the arcs.

\begin{itemize}
\item Fill row corresponding to each positive and negative crossing as shown in Figure~\ref{F:matrix}, 

\begin{figure}[htb]
	\centering
		\includegraphics[scale=0.7]{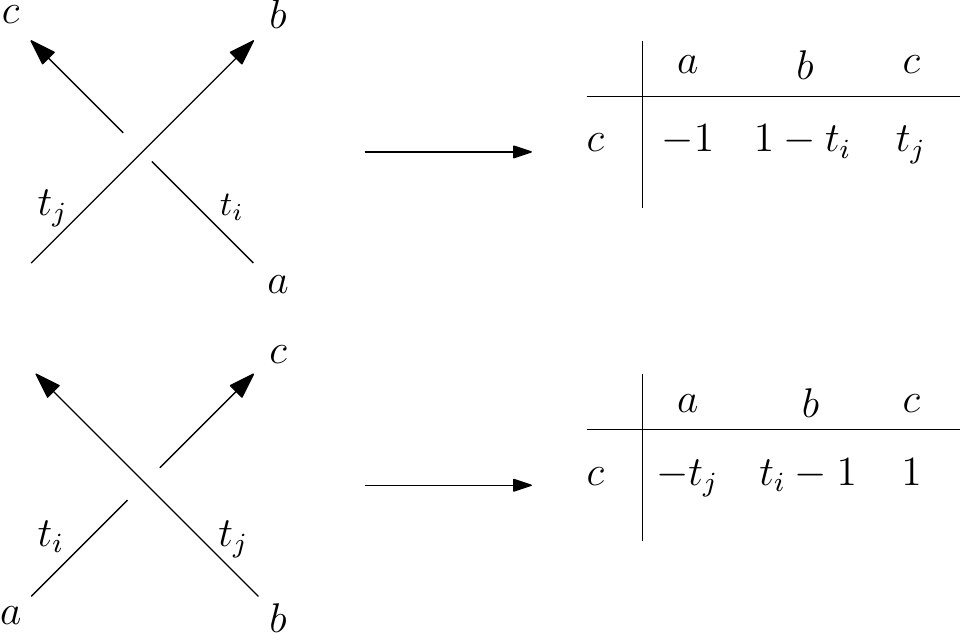}
	\caption{The rule to fill the matrix $M^\psi(T)$, where $t_i$ and $t_j$ are not necessarily different. If $b=a$ or $b=c$ we add the contributions.}
	\label{F:matrix}

\end{figure}

\item At each point on the diagram, fill the row as shown in Figure~\ref{F:matrix2}.

\begin{figure}[htb]
	\centering
		\includegraphics[scale=0.7]{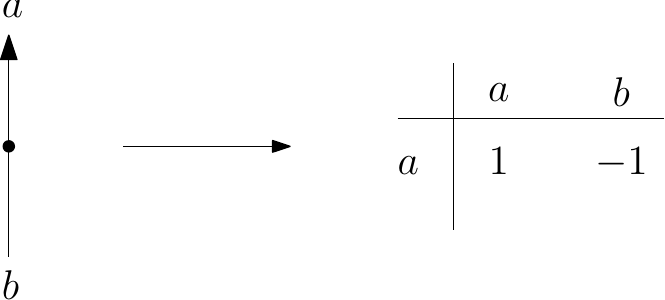}
	\caption{Rule for arcs that don't begin at crossings.}
	\label{F:matrix2}
\end{figure}
\end{itemize}
The other entries of the rows are zero. Virtual crossings can be ignored or considered as divided arcs. Notice that, after some Reidemeister moves of type I, one might suppose that there every arc begins at a crossing, and the receipt of Figure 3 becomes useless to construct the matrix $M^\psi(\tau)$.

\begin{rmk} \label{size}
Let $p$ be the number of internal arcs of~$\tau$. Since $\tau$ has $2n$ arcs connected to the boundary, the total number of arcs is~$2n+p$. One easily observes that the matrix $M^\psi(\tau)$ has size~$(p+n) \times (p +2n)$.
\end{rmk}

\begin{de}
Let $(\tau,\psi)$ be a $\mu$-colored welded $n$-tangle  and $H_\partial$ be the module of rank $2n$ freely generated by the set of marked points~$\{ x_1,\dots,x_{2n} \}$ . The invariant $\alpha$ is defined to be
\[ \alpha(\tau,\psi) = \sum_{I} \lvert M^\psi(\tau)_{I}  \rvert \cdot x_{I} \in \largewedge^n H_\partial, \]
where the sum is taken for all subset $I \subset \{1,\dots,2n \}$ of $n$ elements,  $\lvert M^\psi(\tau)_{I}  \rvert$ is the determinant of the $(n+p)$-minor of $M^\psi(\tau)$ corresponding to the columns indexed by the internal arcs and the columns relative to the arcs indexed by~$I$, and $x_{I}$ is the wedge product of the generators $x_i$ with~$i\in I$.
\end{de}

A computation shows that $\alpha(\tau,\psi)$ is invariant by generalized Reidemeister moves, up to multiplication by a unit. Otherwise the invariance is simply a consequence of Theorem~\ref{weld} below.

\begin{ex} \label{ex2}
Consider the tangle $\tau$ given by one positive crossing, see Figure~\ref{F:Crossings}. The matrix  $M^\psi(\tau)$ coincides with the matrix of Example~\ref{ex1}. The module $H_\partial$ is generated by $x_1,\dots,x_4$ and
\[ \alpha(\tau,\psi)=  t_2 x_3 \wedge x_4 + x_2 \wedge x_3 - t_2 x_1 \wedge x_4 + (t_1 -1) x_1 \wedge x_3 + x_1 \wedge x_2 \in \largewedge^2 H_\partial.\]
\end{ex}

Let $\tau$ be a welded tangle. We denote $\pi(\tau)$ the group defined by the Wirtinger method (ignoring the virtual crossings). Then, there is a system of generators of $\pi(\tau)$ in one-to-one correspondance with the arcs of~$\tau$.  In particular, a $\mu$-coloring of a welded tangle $\tau$ can be viewed as a group homomorphism $\psi$ from $\pi(\tau)$ to the abelian group freely generated by~$t_1,\dots,t_\mu$. 

The following proposition follows directly from the results of Satoh and Yajima \cite{S,Yaj}.

\begin{propo} \label{wirt} 
Let $\tau$ be a welded tangle. For any ribbon tangle $T$ such that $T$ is the image of $\tau$ by the Tube map, there is an isomophism \[ \pi(\tau) \simeq \pi_1(B^4 \setminus T)\]
 sending arcs of $\tau$ to meridians of~$T$.
\end{propo}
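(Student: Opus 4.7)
The plan is to adapt the classical Wirtinger presentation for the fundamental group of the complement of a closed ribbon surface, due to Yajima and Satoh, to the setting of a tangle with boundary. First I would pass from the welded diagram $\tau$ through the Tube map to a symmetric broken surface $\Sigma\subset B^3$, and then to a regular neighbourhood of $T$ in $B^4\simeq B^3\times[-1,1]$ whose complement deformation retracts onto a $2$-complex built from $\Sigma$. Each arc of $\tau$ corresponds to a sheet of $\Sigma$, and each such sheet contributes a small meridional loop in $B^4\setminus T$; these loops generate $\pi_1(B^4\setminus T)$ by a standard general-position argument for codimension-two embeddings.

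Second, I would read off the relations from the local models at the crossings. At a classical positive or negative crossing, the local picture is an over-sheet passing through an under-sheet along a ribbon singularity; the standard computation (carried out in \cite{S, Yaj, ABMW}) shows that the two meridians of the broken under-arc are conjugate by the over-arc meridian, which is exactly the Wirtinger relation defining $\pi(\tau)$ at that crossing. At a virtual crossing, the two corresponding sheets of $\Sigma$ are separated in the fourth coordinate of $B^4$ and therefore do not interact, which matches the convention that virtual crossings contribute no Wirtinger relation. Thus the two presentations coincide after identifying arc generators with meridian generators.

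Third, I would handle the boundary behaviour to upgrade the closed-surface statement to the tangle case. The tangle $T$ meets $\partial B^4=S^3$ in a trivial $2n$-component link $L$, and the $2n$ marked boundary points $x_1,\ldots,x_{2n}$ of $\tau$ correspond to the meridians of $L$. Since $S^3\setminus L$ has free fundamental group on these meridians, the inclusion $S^3\setminus L\hookrightarrow B^4\setminus T$ sends the free Wirtinger generators on the boundary into the global generating set described above, and no extra relations appear beyond those already read off the crossings in the interior of $D$. Combined with the preceding two steps, this yields a presentation of $\pi_1(B^4\setminus T)$ identical to the Wirtinger presentation of $\pi(\tau)$, and hence the desired isomorphism sending arcs to meridians.

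The main obstacle is the crossing-by-crossing verification, in particular the explicit local computation in a $4$-ball around a ribbon singularity showing that the two under-arcs yield meridians conjugate by the over-arc meridian. This computation is delicate but is precisely what is carried out in the cited references for the closed case; the adaptation here amounts to checking that the argument is purely local and therefore insensitive to the presence of boundary, and that the naturality of the Wirtinger construction makes the arc-to-meridian correspondence compatible with the splitting of arcs induced by virtual crossings.
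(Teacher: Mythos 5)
Your proposal is correct and follows essentially the same route as the paper, which gives no argument of its own but simply notes that the statement ``follows directly from the results of Satoh and Yajima'' \cite{S,Yaj}; your three steps (sheets of the broken surface as meridian generators, Wirtinger conjugation relations at ribbon singularities coming from classical crossings, no relations at virtual crossings, plus the boundary bookkeeping for the trivial link $L$) are precisely the standard computation behind that citation, adapted to the tangle-with-boundary setting. The only caveat is that the local conjugation computation at a ribbon singularity is genuinely the content of the cited work and not something you re-derive, but since the paper treats it the same way, this is not a gap relative to the paper's own treatment.
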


\begin{thm} \label{weld}
Let $\mu$ be a positive integer and $G$ be a free abelian group of rank~$\mu$. 
Let $(\tau,\psi)$ be a $\mu$-welded tangle and $(T,\varphi)$ be a $G$-colored ribbon tangle, such that $T$ is the image of $\tau$ by the Tube map. Suppose that there are generators $t_1,\dots, t_\mu$ of $G$ such that the following diagram commutes:

\[
\begin{CD}
\pi_1(B^4\setminus T) @>{\varphi} >> G \\
@V{\simeq}VV @VV{\simeq}V \\
\pi(\tau) @>{\psi}>> \langle t_1,\dots,t_\mu \rangle
\end{CD}
\]

 Then, 
\[  \mathsf{A}(T,\varphi) =\alpha (\tau,\psi)  \in  \largewedge^n H_\partial. \] 
\end{thm}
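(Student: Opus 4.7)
The plan is to show that the matrix $M^\psi(\tau)$ is, up to units in $R$, a presentation matrix of the $R$-module $H = H_1^\varphi(X_T, \ast)$, arranged so that $2n$ of its columns correspond exactly to the images under $m_\partial$ of the boundary meridians $x_1, \dots, x_{2n}$. Granting this, the theorem will follow from the determinantal expression for $\mathcal{A}_T^\varphi$ recalled just after Proposition~\ref{pres}, together with the characterization of $\mathsf{A}(T,\varphi)$ given in Definition~\ref{alex}.

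First I would exploit Proposition~\ref{wirt} and the commutative square in the statement: the Tube map identifies $\pi(\tau)$ with $\pi_1(X_T)$ while sending arcs to meridians, and this identification intertwines $\psi$ with $\varphi$ after abelianization. Using the Wirtinger-type algorithm of~\cite{ABMW, S}, this furnishes a group presentation of $\pi_1(X_T)$ with one generator per arc of $\tau$ and one relator per classical crossing. By Remark~\ref{size}, $\tau$ carries $2n+p$ arcs and $p+n$ classical crossings, so this is already a deficiency-$n$ presentation; Fox calculus (tensored with $R$ via $\varphi$) then produces a presentation matrix of the twisted homology $H$ of the same deficiency, which incidentally proves Proposition~\ref{pres}. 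The key verification is that the rows of $M^\psi(\tau)$ prescribed in Figures~\ref{F:matrix} and~\ref{F:matrix2} coincide, up to an overall sign per row, with the images under $\psi$ of the Fox derivatives of the Wirtinger relators at each crossing: the entries involving $t_i$ and $t_j$ encode exactly the holonomy of the over- and under-strands along the Wirtinger word. The degenerate cases $b=a$ or $b=c$ in Figure~\ref{F:matrix} correspond to Fox derivatives of words in which a generator appears at several positions, which is precisely why the contributions are summed.

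With $M^\psi(\tau)$ recognized as such a presentation matrix, the remainder is linear algebra. For any $n$-subset $I \subset \{1,\dots,2n\}$, let $I^c$ denote its complement. Applying the determinantal formula, $\mathcal{A}_T^\varphi(m_\partial x_{I^c})$ equals the determinant of the $(p+2n)\times (p+2n)$ square matrix obtained by stacking below $M^\psi(\tau)$ the $n$ unit row-vectors indexing the boundary columns labeled by $I^c$. Expanding along these added rows produces, up to an explicit sign depending only on $I$, the $(p+n)$-minor $|M^\psi(\tau)_I|$ retaining all $p$ internal columns and the $n$ boundary columns indexed by $I$. By Definition~\ref{alex}, $\omega_\partial(\mathsf{A}(T,\varphi)\wedge x_{I^c}) = \mathcal{A}_T^\varphi(m_\partial x_{I^c})$, hence the coefficient of $x_I$ in $\mathsf{A}(T,\varphi)$ equals $\pm |M^\psi(\tau)_I|$. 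Summing over $I$ yields $\mathsf{A}(T,\varphi) = \alpha(\tau,\psi)$ in $\largewedge^n H_\partial$, up to a unit in $R$.

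The main obstacle is the careful bookkeeping in the Fox-calculus step: one must check that the tabulated entries in Figures~\ref{F:matrix} and~\ref{F:matrix2} truly match the Fox derivatives of a Wirtinger presentation for every type of crossing (positive, negative, and the degenerate cases), and justify that virtual crossings may be discarded because, by the welded convention, they contribute no Wirtinger relator. The signs introduced by the expansion along the added rows must also be consistently tracked across the sum over $I$, but since both $\mathsf{A}$ and $\alpha$ are defined only up to a unit in $R$, this reduces to orderly accounting rather than a genuine difficulty.
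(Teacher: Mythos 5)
Your proposal follows essentially the same route as the paper's proof: identify $M^\psi(\tau)$ as a deficiency-$n$ presentation matrix of $H_1^\varphi(X_T,\ast)$ via Proposition~\ref{wirt}, the Wirtinger presentation and Fox calculus, then compute $\mathcal{A}_T^\varphi(m_\partial x_{I^c})$ by appending unit rows and expanding the determinant to recover the minors $\lvert M^\psi(\tau)_I\rvert$, and conclude via Definition~\ref{alex}. The only cosmetic difference is that the paper tracks the signs $\epsilon_{\bar I}$ explicitly so that they cancel in the final sum, whereas you absorb them into the ambient ``up to a unit'' ambiguity, which is consistent with the paper's conventions.
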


\emph{Welded string links} are string links with possibly virtual crossings;  {for details on these last ones see~\cite{KLW}}. Through the Tube map, ribbon tubes can be described by welded string links, see~\cite[Section 3.3]{ABMW}. By Theorem \ref{ind}, Proposition~\ref{burau} and Theorem \ref{weld}, the invariant $\alpha$ induces the usual  (generalisation of) colored Burau representation - or Gassner, if the coloring is maximal - on (welded) string links~\cite{BA}.

\begin{proof}
The points of $\tau \cap \partial B^2$  are in one-to-one correspondence with the component of the trivial link $L$ in $T \cap \partial B^4$, and $H_\partial$ is generated by~$x_1,\dots, x_{2n}$. By Proposition \ref{wirt} and Fox calculus, 
the matrix $M^\psi(\tau)$ is a presentation matrix of the $R$-module~$H_1^\varphi(X_T,\ast)$,
 viewed as a $\mathbb{Z}[t_1^{\pm 1},\dots,t_\mu^{\pm 1}]$-module through the choice of generators of~$G$. Then  $M^\psi(\tau)$ is used to compute~$\mathcal{A}^\varphi$. By definition, for all  $I \subset \{1,\dots,2n \}$ with cardinal~$n$:
\[ \omega_\partial (\mathsf{A}(T,\varphi) \wedge x_I)= \mathcal{A}^\varphi(m_\partial x_I).\]
 To calculate
 $\mathcal{A}^\varphi(m_\partial x_I)$, we consider the matrix~$M^\psi(\tau)$, add $n$ row vectors giving the element $m x_{i_1}, \dots, m x_{i_n}$ and  compute the determinant of the resulting square matrix. Hence adding $m x_{i_j}$ corresponds to add the $p+n+j^{\text{th}}$ row $(0,\dots,0,1,0,\dots,0)$ where $1$ is at position~$p+j$. 
We obtain $\mathcal{A}^\varphi(m_\partial x_I) = \epsilon_{\bar{I}} \lvert M^\psi(\tau)_{\bar{I}} \rvert$, where $\bar{I}$ is the complement of $I$ and $\epsilon_{\bar{I}}$ is the signature of the permutation~{$I \bar{I}$} (where the elements of $I$ in increasing
order are followed by the elements of $\bar{I}$ in increasing order). 
Finally, we get \[\mathsf{A}(T,\varphi)= \sum_I \epsilon_{\bar{I}} \cdot \omega_\partial(\mathsf{A}(T,\varphi) \wedge x_I) \cdot x_{\bar{I}} =\sum_I  \lvert M^\psi(\tau)_{\bar{I}}  \rvert \cdot x_{\bar{I}} = \alpha(\tau,\psi).\]
\end{proof}

\begin{rmk}
The invariant $\alpha$ coincides up to a unit with the invariant of virtual tangles introduced by Archibald~\cite{Arc}. It is worth mentioning that using a specific canonical choice for the marked points~$x_i$, her construction is well-defined, not only up to multiplication by a unit (through the multiplication by a correction term).   
\end{rmk}

\begin{proof} of Proposition~\ref{pres}.
As observed in the proof of Theorem~\ref{weld}, the matrix $M^\psi(\tau)$ is a presentation matrix of 
 $H_1^\varphi(X_T,\ast)$, whose size is $(p+n) \times (p+2n)$, see Remark~\ref{size}. We get a presentation of deficiency~$n$.
\end{proof}

\subsection{The circuit algebra \texorpdfstring{$\mathsf{Weld}_\mu$}{}}


\begin{de}
\label{D:circuitDiagrams}
A \emph{circuit $p$-diagram} consists of the following data:
\begin{itemize}
	\item the unit disk $D=D_0$ in $\mathbb{C}$ 
  together with a finite set of disjoint subdisks $D_1, \dots, D_p$ in the interior of~$D$. 
	For every $i$ in~$\{0, \dots, p\}$, each $D_i$ have $2n_i$ distinct marked points  with sign on  its boundary (with $n=n_0$), and a base point $\ast$ on the boundary of each disk.
	\item a finite set of embedded oriented arcs whose boundary are marked points in the~$D_i$. They may cross each other along virtual crossings \emph{only}. Each marked point is the boundary point of some string -which meets the corresponding disk transversally- and the sign is coherent with the orientation. 
\end{itemize}

\end{de}

Circuit diagrams encode only the matching of marked points, see Figure~\ref{F:Circuit}.

\begin{figure}[hbt]
\includegraphics[scale=0.1]{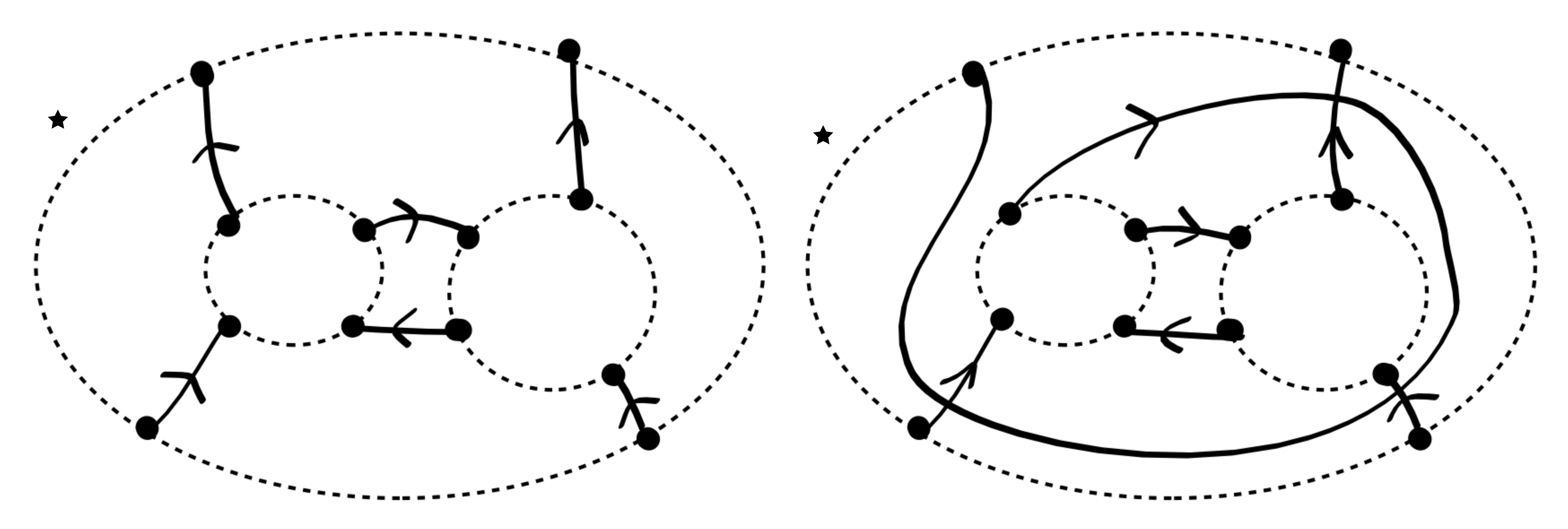}
\caption{Two equivalent circuit diagrams.}
\label{F:Circuit}
\end{figure}

\begin{de}
Consider a circuit $p'$-diagram $P'$ and  a circuit $p''$-diagram $P''$ such that~$D'_i$ is a disk of $P'$  with~$n'_i = n''$, for some $i \in \{1,\ldots,p'\}$. If the signs of the marked points match, we define the diagram $P=P' \circ_i P''$ by rescaling via isotopy the tangle $P''$ so that the boundary of~$D''$ is identified with the boundary of~$D'_i$, and making its marked and base points coincide to those of $D'_i$. Then $D'_i$ is removed to obtain~$P' \circ_i P''$. 
This operation is well defined since the starting points eliminate any rotational ambiguity.
\end{de}

A $\mu$-\emph{colored} circuit diagram is a pair $(P,\psi)$ where $P$ is a circuit diagram and $\psi$ is a map from the set of arcs of $P$ to the set~$\{ t_1,\cdots,t_\mu \}$. 
Two $\mu$-colored circuit diagrams can be composed once the coloring match on the boundary components. The $\mu$-colored circuit diagrams form and operad~$\mathcal{D}_\mu$.
Let $S=\mathbb{Z}[t_1^{\pm 1},\dots,t_\mu^{\pm 1}]$ be the Laurent polynomial ring. 
Let $\mathcal{H}om_\mu$ be the operad of tensor products of $S$-modules and $S$-linear maps. The circuit algebra $\mathsf{Weld}_\mu$ is constructed as a morphism from the operad $\mathcal{D}_\mu$ to~$\mathcal{H}om_\mu$ as follows. 

 Consider the unit circle with a base point and a set of marked points~$X= \{ x_1,\dots,x_{2k} \}$, for~$k \geq 0$ (with a sign).
 To this data, we associate the module~$\largewedge^k H_\partial$, where $H_\partial$ is the free $S$-module of rank $2k$ generated by~$X$.
 Let $(P,\psi)$ be a $\mu$-colored circuit diagram, and $M=\{c_1,\dots, c_q \}$ be the set of curves of~$P$. Consider the free module $H$ generated by~$M$, and the volume form $\omega\colon \largewedge^q H \to S$ related to this basis.  For~$i=1,\dots,p$, denote $H_{\partial_i}$ the module associated to the boundary circle~$\partial D_i$ and $H_\partial$ he module associated to $\partial D$. Let $m_i\colon H_{\partial_i} \to H$ be the morphims defined by $m_i(x_j)= \text{sign}(x_j) c_j$ if $x_j \in \partial c_j$. The mophism $m_\partial$ is defined similarly. Let $\omega_{\partial_i}$ be the volume form on $H_{\partial_i}$ related to the generating system of points of the circle~$\partial D_i$. Set $m = \otimes_{i} \left( \largewedge^{n_i} m_{\partial_i} \right)$.
 To the colored diagram $(P,\psi)$ we associate \[ \gamma_{P,\psi} \colon \bigotimes_{i=1}^p \largewedge^{n_i} H_{\partial_i}
 \to \largewedge^{n} H_\partial \] such that, for~$x \in \otimes_{i} \big( \largewedge^{n_i} H_{\partial_i} \big)$,
\begin{equation*}
 \omega_C^\varphi ( m(x) \wedge m_\partial(y) ) = \omega_\partial ( \gamma_{P,\psi} (x) \wedge y), \ \forall y \in \largewedge^n H_\partial.
\end{equation*}
Then, we can prove Proposition \ref{wel} below, by repeating the arguments of the proof of Theorem \ref{cob}.

\begin{propo} \label{wel}
$\mathsf{Weld}_\mu$ is a circuit algebra. 
\end{propo}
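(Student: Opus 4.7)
The plan is to mirror the proof of Theorem~\ref{cob} step by step, replacing the homology modules $H_1^{\varphi}(X_C, J)$ of cobordism complements by the free modules on the sets of arcs of the circuit diagrams, and replacing the Mayer--Vietoris presentation with the obvious concatenation of arcs at the identified boundary circle. Since the diagrammatic setting is ``freer'' than the topological one, the statement should reduce to bookkeeping of wedge products and signs; no nontrivial homological input is needed.

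First I would fix two $\mu$-colored circuit diagrams $(P',\psi')$ and $(P'',\psi'')$ with a composable pair of inner/outer disks $D'_i \leftrightarrow D''$, write $(P,\psi) = (P' \circ_i P'', \psi)$, and choose notation parallel to the proof of Theorem~\ref{cob}: let $M'=\{c'_1,\ldots,c'_{q'}\}$ and $M''=\{c''_1,\ldots,c''_{q''}\}$ be the arcs of $P'$ and $P''$, with free modules $H'$ and $H''$ and volume forms $\omega'$ and $\omega''$. Let $\alpha_1,\ldots,\alpha_{2n''}$ be the marked points on $\partial D''\simeq \partial D'_i$, viewed as generators of $H_{\partial''}\simeq H_{\partial'_i}$. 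The set $M$ of arcs of $P$ is obtained from $M'\sqcup M''$ by identifying the arcs abutting on $\partial D'_i$ with those abutting on $\partial D''$ in the way prescribed by the base point and the marked points. Thus $H$ admits a ``presentation''
\[
H \;=\; \langle\, m_{\partial''}\alpha_\bullet,\; m'_i\alpha_\bullet,\; m'(M'\setminus\partial),\; m''(M''\setminus\partial) \;\mid\; m_{\partial''}\alpha_k - m'_i\alpha_k,\; k=1,\ldots,2n''\,\rangle,
\]
and a volume form $\omega$ on $H$ is induced by $\omega'$ and $\omega''$ (after choosing an even number of redundant arcs, as in the proof of Theorem~\ref{cob}, so that signs can be harmlessly normalized).

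Next I would compute $\omega_{\partial'}\bigl(\gamma_{P'\circ_i P''}(u_1\otimes\cdots\otimes (\otimes_l v_l)\otimes\cdots\otimes u_{p'}) \wedge y\bigr)$ by applying the defining relation of $\gamma$ and the presentation of $H$ above, so that the left-hand side becomes, up to a fixed sign,
\[
\sum_{Q\subset\{1,\ldots,2n''\},\, |Q|=n''} (-1)^{|Q|}\epsilon_Q \cdot \omega''\bigl(m_{\partial''}\alpha_Q \wedge m''v\bigr)\cdot \omega'\bigl((m'u\wedge_i m'_i\alpha_{\bar{Q}})\wedge m_{\partial'}y\bigr),
\]
exactly as in the displayed chain of equalities in the proof of Theorem~\ref{cob}. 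Recognizing $\omega''(m_{\partial''}\alpha_Q\wedge m''v)$ as $\omega_{\partial''}\bigl(\gamma_{P'',\psi''}(\otimes_l v_l)\wedge \alpha_Q\bigr)$ and using the expansion
\[
m'_i\bigl(\gamma_{P'',\psi''}(\otimes_l v_l)\bigr) \;=\; \sum_{|Q|=n''} (-1)^{|Q|}\epsilon_Q \cdot \omega_{\partial''}\bigl(\gamma_{P'',\psi''}(\otimes_l v_l)\wedge \alpha_Q\bigr)\cdot m'_i\alpha_{\bar{Q}},
\]
the sum collapses to $\omega_{\partial'}\bigl(\gamma_{P',\psi'}(u_1\otimes\cdots\otimes \gamma_{P'',\psi''}(\otimes_l v_l)\otimes\cdots\otimes u_{p'})\wedge y\bigr)$, which yields the desired associativity of the operad morphism.

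The main obstacle is purely notational: keeping track of the signs $(-1)^{|Q|}\epsilon_Q$ and of the orientation/sign conventions attached to the marked points (via the maps $m_i(x_j)=\mathrm{sign}(x_j)c_j$). As in the proof of Theorem~\ref{cob}, one exploits the freedom to pad the generating set with an even number of ``dummy'' arcs so that the global sign is absorbed, and then verifies that the remaining local signs match on both sides. Once this is done, compatibility with colorings is automatic because $\psi$ is defined on arcs and the identifications respect $\psi'$ and $\psi''$ by assumption, so one obtains a well-defined morphism of operads from $\mathcal{D}_\mu$ to $\mathcal{H}om_\mu$, proving that $\mathsf{Weld}_\mu$ is a circuit algebra.
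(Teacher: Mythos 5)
Your proposal is correct and follows exactly the route the paper takes: the paper's entire proof of Proposition~\ref{wel} is the remark that one repeats the arguments of the proof of Theorem~\ref{cob}, which is precisely what you do, substituting the free modules on arcs for the homology modules and the identification of arcs along $\partial D'_i$ for the Mayer--Vietoris presentation. Your write-up simply makes explicit the bookkeeping that the paper leaves implicit.
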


Note that $\mathsf{Weld}_\mu$ is similar to half densities introduced by Archibald, see~\cite{Arc} for the definition of half densities and~\cite{Dam} for the explicit correspondance. The morphism $\gamma_{P,\psi}$ could be written as the interior product relative to a subset corresponding to interior arcs of~$P$.

The Tube map and the choice of a set of generators $\{t_1,\dots,t_\mu\}$ of $G$  induce a surjective morphism of  algebras
 \[ \mathsf{Weld}_\mu \longrightarrow   \mathsf{C}ob_G .\]

Given a colored circuit diagram $P$  and a collection of colored tangle diagrams $\tau_1,\ldots,\tau_p$, one may create a new tangle, if the data on the boundaries and the colorings match, 
 by gluing $\tau_i$ into the internal disk $D_i$ of~$P$.
Similarly to Theorem \ref{morph}, the invariant $\alpha$ commutes with this action of $\mu$-colored circuit diagrams on $\mu$-colored welded tangles.
Indeed, one has the following proposition.

\begin{propo}
Let $(\tau,\psi)$ be the $\mu$-colored welded tangle obtained by gluing the $\mu$-colored welded tangles $(\tau_1, \psi_1), \cdots, (\tau_p,\psi_p)$ to a $\mu$-colored circuit diagram~$(P,\chi)$.
The following equality holds
\[ \alpha(\tau,\psi) = \gamma_{P,\chi} \big( \alpha(\tau_1,\psi_1) \otimes \cdots \otimes \alpha(\tau_p,\psi_p) \big) \in \largewedge^n H_\partial.\]
\end{propo}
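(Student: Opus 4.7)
The plan is to deduce the statement from Theorem~\ref{morph} by transporting the identity through the surjective morphism of operads $\mathsf{Weld}_\mu \to \mathsf{Cob}_G$ induced by the Tube map, combined with the identification $\alpha = \mathsf{A} \circ \mathrm{Tube}$ established in Theorem~\ref{weld}. The key observation is that all the ingredients in the proposition are the diagrammatic shadows of the corresponding topological ingredients of Theorem~\ref{morph}, so the topological statement forces the combinatorial one.

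First I would set up the topological counterparts. Put $T_i = \mathrm{Tube}(\tau_i)$ and endow it with the coloring $\varphi_i$ corresponding to $\psi_i$ via the isomorphism $\pi(\tau_i) \simeq \pi_1(B^4\setminus T_i)$ of Proposition~\ref{wirt}. The Tube construction applied to the arcs of the circuit diagram $P$ (whose only singularities are virtual crossings) produces disjoint, non-singular embedded annuli in a $4$-ball minus $p$ small open $4$-balls, which is precisely a colored cobordism $(C,\varphi)$ in the sense of Section~\ref{ribbon}; the coloring $\varphi$ is dictated by $\chi$ in the same manner. The compatibility of the Tube map with gluing — which is exactly the operad morphism $\mathsf{Weld}_\mu \to \mathsf{C}ob_G$ cited just before the statement — gives $\mathrm{Tube}(\tau) = T$, where $T$ is the ribbon tangle obtained by inserting each $T_i$ into the $i$-th internal ball of $C$, and the induced coloring on $T$ agrees with $\psi$ (seen through $\pi(\tau) \simeq \pi_1(B^4\setminus T)$).

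Next, I invoke Theorem~\ref{weld} on each piece and on the total tangle, obtaining $\alpha(\tau_i,\psi_i) = \mathsf{A}(T_i,\varphi_i)$ and $\alpha(\tau,\psi) = \mathsf{A}(T,\varphi)$ in $\largewedge H_\partial$, all up to a unit of $R \simeq S$. The same operad morphism identifies the linear map $\gamma_{P,\chi}$ with $\Upsilon_{C,\varphi}$, since both are defined by the same formula (\ref{defcob}) once one observes that the presentation matrix coming from the Wirtinger procedure on the arcs of $P$ coincides, up to the trivial identification rows produced by virtual crossings, with a presentation of $H_1^\varphi(X_C,J;R)$. Theorem~\ref{morph} then delivers
\[
\alpha(\tau,\psi) \;=\; \mathsf{A}(T,\varphi) \;=\; \Upsilon_{C,\varphi}\!\bigl(\mathsf{A}(T_1,\varphi_1)\otimes\cdots\otimes\mathsf{A}(T_p,\varphi_p)\bigr) \;=\; \gamma_{P,\chi}\!\bigl(\alpha(\tau_1,\psi_1)\otimes\cdots\otimes\alpha(\tau_p,\psi_p)\bigr),
\]
as required.

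The main obstacle is verifying the second sentence of the previous paragraph, namely that the operad morphism does intertwine $\gamma_{P,\chi}$ with $\Upsilon_{C,\varphi}$. One has to check that the free module generated by the curves of $P$, with the volume form used in the definition of $\gamma_{P,\chi}$, matches $H_1^\varphi(X_C,J;R)$ with its Alexander function; here the inclusions $m_i$ on both sides coincide because Wirtinger generators on $\partial D_i$ map to meridians of $\partial B_i \setminus L_i$ under $\mathrm{Tube}$. If one prefers a self-contained argument, a direct combinatorial proof is available by mimicking the calculation of Theorem~\ref{morph}: assemble the matrix $M^\psi(\tau)$ as a block matrix built from the $M^{\psi_i}(\tau_i)$ together with the identification rows imposed by the arcs of $P$, and expand the relevant maximal minors by Laplace along the identification block indexed by subsets $Q_i \subset \{1,\ldots,2n_i\}$ of cardinality $n_i$. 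This reproduces verbatim the combinatorics of signs and subsets used in Theorem~\ref{morph}, with $\omega_C^\varphi$ replaced by the tautological volume form $\omega$ on the free module generated by the curves of $P$.
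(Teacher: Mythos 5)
Your argument is correct, but it takes a different route from the one the paper intends. The paper proves this proposition by repeating, verbatim in the diagrammatic setting, the homological computation of Theorem~\ref{morph}: one assembles a presentation matrix of the glued diagram from the matrices $M^{\psi_i}(\tau_i)$ together with the identification rows coming from the arcs of $P$, and expands the relevant maximal minors over the subsets $Q_i$ exactly as in that proof (this is the ``self-contained'' fallback you sketch in your last paragraph, and it is in fact simpler here because all the modules involved are free with explicit bases, so no Mayer--Vietoris argument is needed). Your primary route instead lifts everything to topology: you apply the Tube map to each $\tau_i$ and to $P$, invoke Theorem~\ref{weld} to identify $\alpha$ with $\mathsf{A}$, invoke Theorem~\ref{morph} upstairs, and descend again. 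This is a legitimate and arguably more conceptual proof, and it correctly isolates the two points that need checking: that the Tube construction intertwines the gluing of diagrams into circuit diagrams with the gluing of ribbon tangles into cobordisms, and that $\gamma_{P,\chi}$ corresponds to $\Upsilon_{C,\varphi}$ under the identification of the free module on the curves of $P$ with $H_1^\varphi(X_C,J;R)$ (meridians of the annuli). Both facts are asserted in the paper via the surjective morphism $\mathsf{Weld}_\mu\to\mathsf{C}ob_G$, but neither is proved there in detail, so your proof trades the explicit determinant computation for a reliance on that compatibility; the direct combinatorial argument buys self-containedness, while yours buys brevity and makes transparent why the diagrammatic and topological circuit algebras behave identically.
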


\section{Examples}
\label{ex}

Consider the tangle diagram~$\tau$, given in Figure~\ref{tau}. We let $G=\mathbb{Z}= <t>$ and $\psi$ be the coloring sending all arcs of $\tau$ to~$t$.
In this section, we compute $\alpha(\tau,\psi)$ in different ways. By Theorem~\ref{weld}, this computes the value of $\mathsf{A}(T,\varphi)$ the image of $(\tau,\psi)$ by the Tube map. 

\begin{figure}[hbt] 
\includegraphics[scale=0.5]{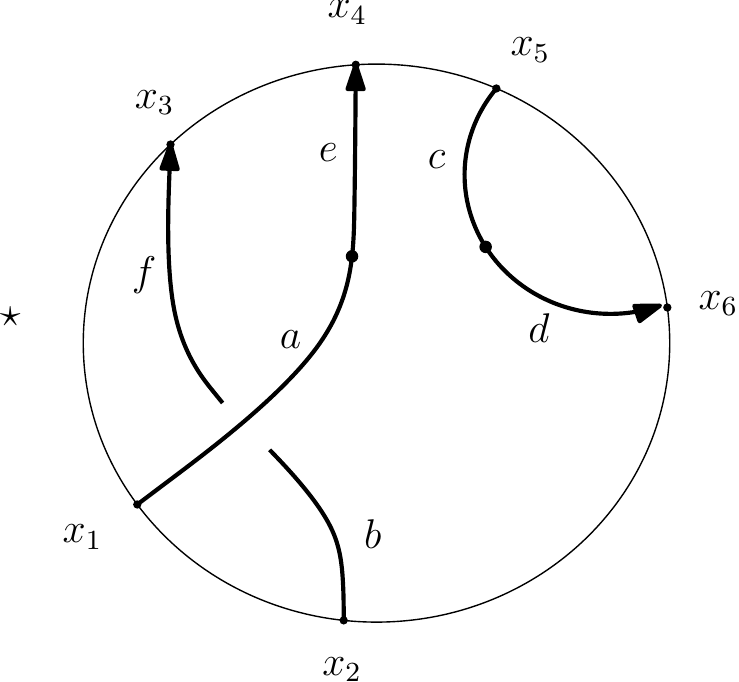}
\caption{A welded tangle $\tau$.}
\label{tau}
\end{figure}

First, we compute $\alpha(\tau,\psi)$ directly. Label the arcs of $\tau$ with letters $a$ to~$f$ as in Figure~\ref{tau}. We obtain the matrix 
\[
M^\psi(\tau)= 
\bordermatrix{
  & a & b & c & d & e & f  \cr
 & 0 & 0 & -1  & 1 & 0 & 0  & \cr
 & -1 & 0 & 0  & 0 & 1 & 0  & \cr
 & 1-t & -1 & 0 & 0 & 0 & t &
}
 \] 
The $\mathbb{Z}[t^{\pm 1}]$-module $H_\partial$ is free, generated by~$x_1,\dots, x_6$ and  $\alpha(\tau,\psi) \in \largewedge^3 H_\partial$ is given by
\begin{dmath*}
\alpha(\tau,\psi) = -x_1 \wedge x_2 \wedge x_5 + x_1 \wedge x_2 \wedge x_6 + x_2 \wedge x_5 \wedge x_4 + (t-1) x_1 \wedge x_5 \wedge x_4 - t x_1 \wedge x_5 \wedge x_3  \\
 + (1-t) x_1 \wedge x_6 \wedge x_4 + t x_1 \wedge x_6 \wedge x_3 + t x_6 \wedge x_4 \wedge x_3 - x_2 \wedge x_6 \wedge x_4 - t x_5 \wedge x_4 \wedge x_3.
\end{dmath*}
We now consider $(\tau,\psi)$ as the composition of the circuit diagram $(P,\psi)$ with $(\sigma,\psi)$, see Figure~\ref{comp}.
\begin{figure}[hbt]
\includegraphics[scale=0.5]{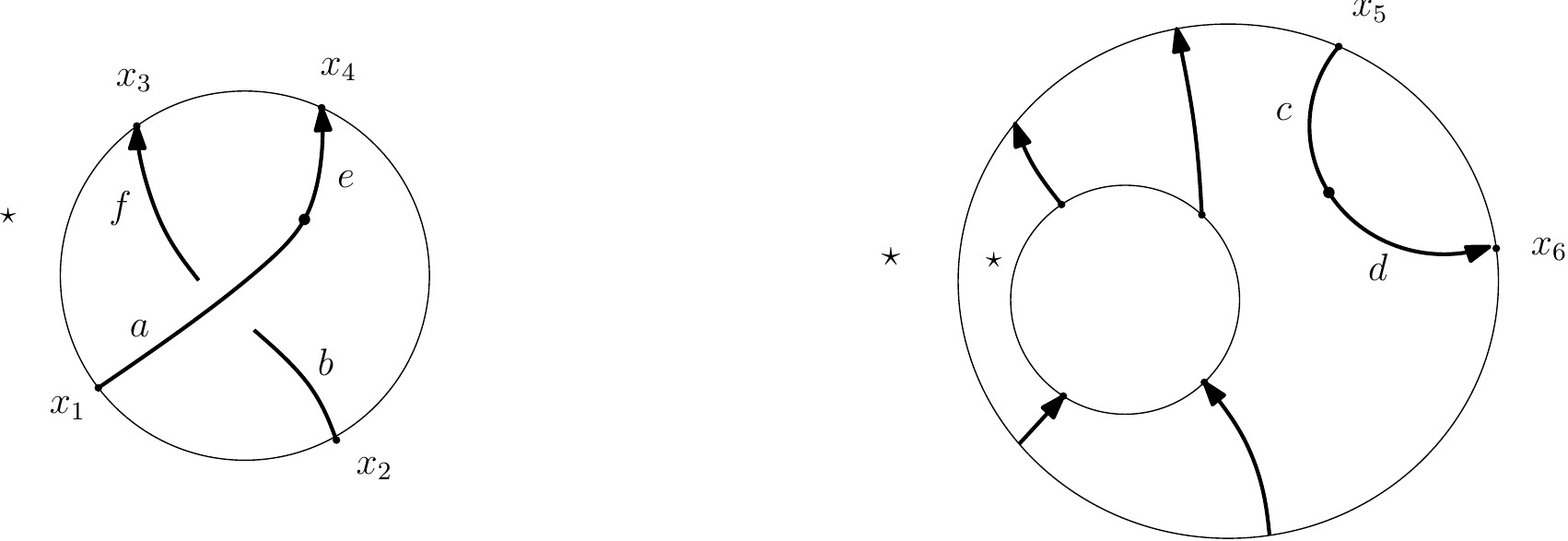}
\caption{A welded tangle $\sigma$ and a circuit diagram $P$.}
\label{comp}
\end{figure}
We have to compute $\gamma_{P,\psi}\colon \largewedge^2 H_{\partial_1} \to \largewedge^3 H_{\partial}$ (here~$p=1$). Let $H$ be the free module generated by the curves of~$\sigma$, labelled~$a,b,e,f$. 
We have that $H_{\partial_1} = \langle x_1,\dots, x_4 \rangle$ and~$H_\partial= \langle x_1,\dots, x_6 \rangle$.  Using the volume form on $H$ related to the choice of the basis~$a,b,c,e,d,f$, and the maps induced by the inclusions $m_1\colon H_{\partial_1} \to H$ and~$m_\partial \colon H_\partial \to H$, we obtain  \[ \gamma_{C,\psi} (x_i \wedge x_j) = x_i \wedge x_j \wedge (x_6 - x_5), \ \forall \ i,j=1,\dots,4,\]
and
\[ M^\psi(\sigma)= 
 \bordermatrix{
  & a & b & e & f \cr
 & -1 & 0 & 1  & 0   & \cr
 & 0 & -1 & 1-t  & t & }
 \] 
We get 
\[
\alpha(\sigma,\psi)=  x_1 \wedge x_2 + (t-1) x_1 \wedge x_4 - t x_1 \wedge x_3 + x_2 \wedge x_4 - t x_3 \wedge x_4.
\]
The composition $\alpha(\tau,\psi)= \gamma_{P,\psi} ( \alpha(\sigma,\psi) )$ gives the result.
Finally, we consider $\tau$ as the composition of the tangle $\sigma \otimes \beta$ with the circuit~$Q$, see Figure~\ref{3}.
\begin{figure}[hbt]
\includegraphics[scale=0.5]{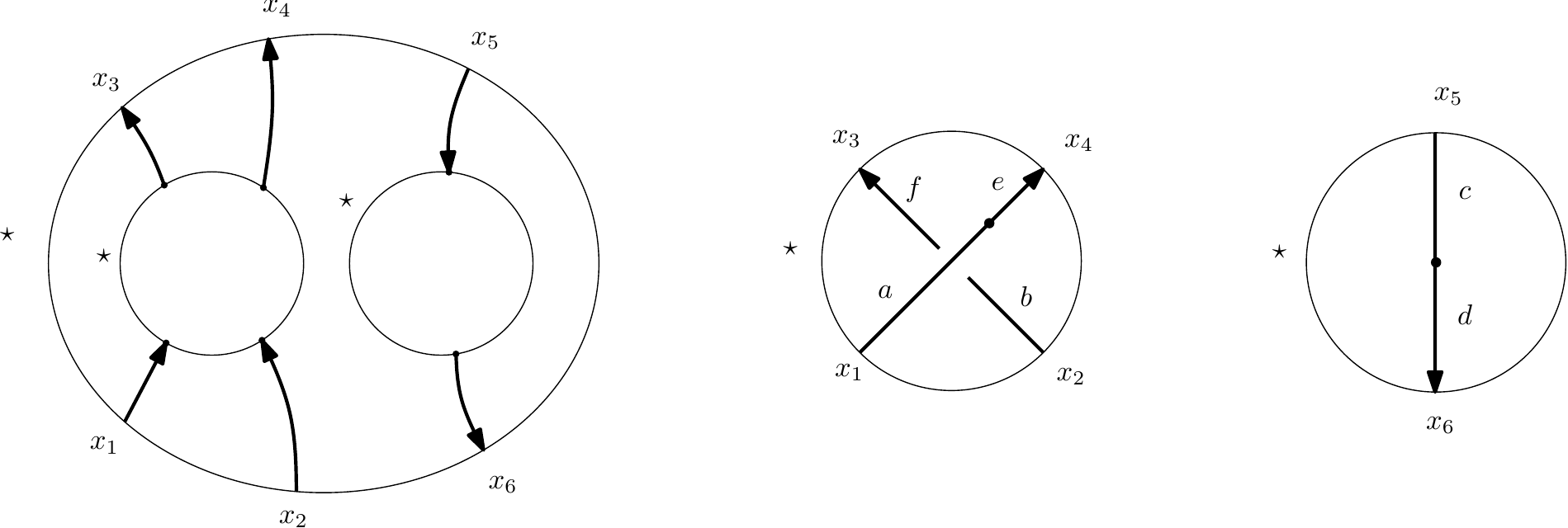}
\caption{A circuit diagram $Q$ representing a cobordism and two welded tangles $\sigma$ and~$\beta$.}
\label{3}
\end{figure}
Here~$p=2$, and \[\gamma_{Q,\psi} \colon \largewedge^2 H_{\partial_1} \otimes H_{\partial_2} \to  \largewedge^3 H_\partial.\] As previously $\alpha(\sigma,\psi)= x_1 \wedge x_2 + (t-1) x_1 \wedge x_4 - t x_1 \wedge x_3 + x_2 \wedge x_4 - t x_3 \wedge x_4 \in \largewedge^2 H_{\partial_1}$ and $\gamma(\beta)=x_6 - x_5 \in H_{\partial_2}$. The composition $\alpha(\tau,\psi)=\gamma_{Q,\psi} \left( \alpha(\sigma,\psi) \otimes \alpha(\beta,\psi) \right)$ gives the result again.

\begin{rmk} \label{bur}
In the sense of Subsection~\ref{SS:Burau}, there is a splitting of the tangle $\sigma$  to an (oriented) braid $\sigma_1$ in~$B_2$. We have
\[
M_0 = \langle x_1, x_2 \rangle \hbox{ and } M_1= \langle x_3, x_4 \rangle,
\]
 and $\alpha(\sigma,\psi) \in \largewedge^2 H_{\partial_1} \simeq \largewedge^2 (M_0 \oplus M_1) \simeq \bigoplus_{k=0}^2 \largewedge^k M_0  \otimes \largewedge^{2-k} M_1$, similarly to the proof of Theorem~\ref{ind}. Then $\alpha(\sigma,\psi)$ decomposes as:
\[\big(-t x_3 \wedge x_4 \big) \oplus \big((t-1) x_1 \otimes x_4 - t x_1 \otimes x_3 + x_2 \otimes x_4 \big) \oplus \big(x_1 \wedge x_2\big).
\]
Let $\omega_0: \Lambda^2 M_0 \rightarrow R$ be the volume form related to the basis $\langle x_1,x_2 \rangle$.
For $k=1$, the element $(t-1) x_1 \otimes x_4 - t x_1 \otimes x_3 + x_2 \otimes x_4 \in M_0  \otimes M_1$ induces the morphism $M_0 \rightarrow M_1$:
$$
x \mapsto (t-1) \omega_0(x \wedge x_1) \cdot x_4 - t \, \omega_0(x \wedge x_1) \cdot x_3 + \omega_0(x \wedge x_2) \cdot x_4.  
$$
The image of $x_1$ is $x_4$ and the image of $x_2$ is $(1-t)x_4 + t x_3$. This corresponds to the Burau representation. 
 Similalrly, the other values of $k$ give $k^{\text{th}}$-exterior powers of Burau, up to a sign.
\end{rmk}
\section*{Acknowledgements}
The authors thank B. Audoux for accurate comments on the manuscript. They thank P.~Bellingeri, J.-B.~Meilhan and E.~Wagner for helpful comments, and G.~Massuyeau who brought to our attention the similarities between Archibald's invariant and the functor constructed by Bigelow, Cattabriga and the second author.
\end{proof}



\bibliographystyle{amsalpha}
\bibliography{AlexWelded4}

\end{document}